\newtheorem{theorem}{Theorem}[section]
\newtheorem{lemma}[theorem]{Lemma}
\newtheorem{corollary}[theorem]{Corollary}
\newtheorem{proposition}[theorem]{Proposition}
\newcommand\blfootnote[1]{%
  \begingroup
  \renewcommand\thefootnote{}\footnote{#1}%
  \addtocounter{footnote}{-1}%
  \endgroup
}
\title{Exploring spaces of semi-directed phylogenetic networks}
\author[1]{Simone Linz}
\author[2$^\ast$]{Kristina Wicke}
\affil[1]{School of Computer Science, University of Auckland, Auckland, New Zealand}
\affil[2]{Department of Mathematical Sciences, New Jersey Institute of Technology, Newark NJ, USA}
\date{}
\begin{document}
\maketitle

\begin{abstract}
    Semi-directed phylogenetic networks have recently emerged as a class of phylogenetic networks sitting between rooted (directed) and unrooted (undirected) phylogenetic networks as they contain both directed as well as undirected edges. While the spaces of rooted phylogenetic networks and unrooted phylogenetic networks have been analyzed in recent years and various rearrangement moves to traverse these spaces have been introduced, the results do not immediately carry over to semi-directed phylogenetic networks. Here, we propose a simple rearrangement move for semi-directed phylogenetic networks, called cut edge transfer (CET), and show that the space of semi-directed level-$1$ networks with precisely $k$ reticulations is connected under CET. This level-$1$ space is currently the predominantly used search space for most algorithms that reconstruct semi-directed phylogenetic networks. Hence, every semi-directed level-$1$ network with a fixed number of reticulations and leaf set can be reached from any other such network by a sequence of CETs. By introducing two additional moves, CET$^+$ and CET$^-$, that allow for the addition or deletion of reticulations, we then establish connectedness for the space of all semi-directed phylogenetic networks on a fixed leaf set. As a byproduct of our results for  semi-directed phylogenetic networks, we also show that the space of rooted level-$1$ networks with a fixed number of reticulations and leaf set is connected under CET, when translated into the rooted setting.
\end{abstract}

\textit{Keywords:} Phylogenetic networks; level-$1$; cut edge transfer; semi-directed networks

\blfootnote{$^\ast$Corresponding author\\ \textit{Email addresses:} \url{s.linz@auckland.ac.nz}, \url{kristina.wicke@njit.edu}}

\section{Introduction} 
Phylogenetic networks are a generalization of phylogenetic trees allowing for the representation of speciation and reticulate evolutionary events such as hybridization or lateral gene transfer. Traditionally, two types of phylogenetic networks were considered in the literature: unrooted (also referred to as undirected or implicit) phylogenetic networks and rooted (also referred to as directed or explicit) phylogenetic networks (see for example~\cite{huson2010phylogenetic}). While the former are often used to represent conflict in data and lack evolutionary directionality, the latter explicitly depict evolution as a directed process from some common ancestor that is represented by the root to the present-day species that are represented by the leaves of the network. Importantly, rooted phylogenetic networks are rooted directed acyclic graphs that, in comparison with phylogenetic trees, contain vertices with in-degree at least two that represent reticulation events. 

Recently, a class of phylogenetic networks that have directed and undirected edges, called {\it semi-directed phylogenetic networks}, has emerged in the literature. Roughly speaking, semi-directed phylogenetic networks are obtained from rooted phylogenetic networks by suppressing the root whose position is not identifiable under many models of sequence evolution and ignoring the direction of all edges, except for those directed into a vertex of in-degree at least two, thereby keeping information on which vertices represent reticulation events. A formal definition of semi-directed phylogenetic networks and all other mathematical concepts used in this paper is given in the next section.

Semi-directed phylogenetic networks have been the focus on studies concerning identifiability (see, e.g., \cite{Allman2022, Ardiyansyah2021, Banos2018, Gross2018, Gross2020, Hollering2021, SolisLemus2016, Solis-Lemus2020, Xu2021}) and also play a major role in phylogenetic network estimation algorithms such as NANUQ~\cite{allman2019nanuq}, SNaQ \cite{SolisLemus2016}, and PhyNEST~\cite{kong2022inference}. The latter two find an optimal semi-directed phylogenetic network that best ``fits'' the observed data under a composite likelihood (also called pseudo-likelihood) framework and search through a space of semi-directed phylogenetic networks (detailed below). While SNaQ is implemented in the popular software tool PhyloNetworks \cite{SolisLemus2017} and uses gene trees and quartet concordance factors  as input, PhyNEST reconstructs an optimal network from site patterns. Like the reconstruction of rooted and unrooted phylogenetic networks, the reconstruction of an optimal semi-directed phylogenetic network typically involves searching the space of all semi-directed phylogenetic networks on a fixed leaf set. More specifically, given an initial phylogenetic network, the network is modified by locally rearranging its structure, the fit of the new network is evaluated, and if there is an improvement in fit, the search continues from that network until a local optimum is found. This  strategy is  referred to as hill-climbing.  Although alternative optimization strategies such as simulated annealing exist, they all involve the need of traversing spaces of phylogenetic networks.

A fundamental question that arises in this regard is whether the space of phylogenetic networks is connected under a given rearrangement operation. In other words, can every phylogenetic network of a space of networks (e.g., all semi-directed phylogenetic networks on a fixed leaf set) be reached from any other phylogenetic network in the space by applying a sequence of these rearrangement operations such that the resulting network after each operation is also in the space? This question has been analyzed for various spaces of unrooted and rooted phylogenetic trees (e.g.,~\cite{allen2001subtree,bordewich2005computational,hein1996complexity}), unrooted phylogenetic networks (e.g.,~\cite{Huber2015, Huber2016, Francis2017, Janssen2019}) and rooted phylogenetic networks (e.g., \cite{bordewich2017lost, Erdos2021, Gambette2017, Janssen2021, Janssen2018, Klawitter2018}), and several rearrangement moves to traverse these spaces have been introduced. However, these results do not immediately carry over to spaces of semi-directed phylogenetic networks. With a focus on the reconstruction of semi-directed level-$1$ networks which are networks whose underlying cycles are vertex disjoint, the authors of \cite{SolisLemus2016} suggested that the moves employed in SNaQ assure connectivity due to their similarity to moves for which there is an established connectivity result for unrooted level-$1$ networks \cite{Huber2015}. However, this has not been formally proven yet. Indeed, Figure 1 of~\cite{Huber2015} shows that, although the space of all unrooted level-$1$ networks on four leaves is connected under the operation proposed in that paper, the space of all such network restricted to those with a single reticulation is not connected under the same operation.

The main purpose of this paper is to establish rigorous connectivity results for spaces of semi-directed phylogenetic networks. Since SNaQ~\cite{SolisLemus2016} and other algorithms in this area of research such as NANUQ~\cite{allman2019nanuq} and PhyNEST~\cite{kong2022inference} focus on the reconstruction of semi-directed level-$1$ networks, we also focus on establishing connectivity results for spaces of semi-directed level-$1$ networks. To this end, we propose a new rearrangement operation for semi-directed phylogenetic networks, called cut edge transfer (CET), which prunes a subnetwork of a semi-directed phylogenetic network by deleting a cut edge and reconnects the two smaller networks by adjoining them with a new cut edge.  We then prove that, under CET, the space of semi-directed level-$1$ networks with a fixed number $k$ of reticulations and leaf set $X$ is connected. Hence, every semi-directed level-$1$ network with $k$ reticulations and leaf set $X$ can be reached from any other such network by a sequence of CETs such that the network resulting from each CET in the sequence is also a semi-directed level-$1$ network with $k$ reticulations and leaf set $X$. As a byproduct of our results, we establish connectivity of rooted level-$1$ networks with a fixed number of reticulations and leaf set under a rooted version of CET. While CETs operate on semi-directed networks of the same \enquote{reticulate complexity} (i.e., the same number of reticulations), we additionally introduce two moves CET$^+$ and CET$^-$ that allow for a change in the number of reticulations by one. Here, we show that under CET, CET$^+$, and CET$^-$, the space of all semi-directed phylogenetic networks on a fixed leaf set and the space of all semi-directed level-$1$ networks with a fixed leaf set are connected. Lastly, we show that if two semi-directed level-$1$ networks are connected by a single CET, then they are also connected by a sequence of restricted local CETs. Such a restricted CET, to which we refer to as CET$_1$, moves a pruned subnetwork across a single internal edge. This last result suggests that the rearrangement moves employed in SNaQ~\cite{SolisLemus2016} are sufficient to reach any semi-directed level-$1$ network in the search space if their so-called ``nearest neighbor interchange (NNI) move on a tree edge''  is slightly relaxed to allow for NNI moves on undirected and directed edges.

The remainder of the paper is organized as follows. We begin by defining rooted and semi-directed phylogenetic networks, as well as several  concepts in the study of phylogenetic networks in Section \ref{sec:prelim}. In Section \ref{sec:CET} we introduce the CET operation and discuss some of its properties. Subsequently, in Section~\ref{sec:rooted} we establish connectedness results for spaces of rooted level-$1$ networks under CET that play a crucial role in establishing analogous results for spaces of semi-directed level-$1$ networks. In Section~\ref{sec:connectedness}, we finally turn to semi-directed phylogenetic networks. We first establish connectedness of semi-directed level-$1$ networks with a fixed number of reticulations and leaf set in Section~\ref{sec:withintier} and then connectedness for all such networks if only the leaf set is fixed in Section \ref{sec:acrosstiers}. Lastly, in Section \ref{Sec:CET1} we show that if two semi-directed level-$1$ networks are connected by a single CET, then they are also connected by a sequence of local CET$_1$ moves. We end the paper with some concluding remarks and directions for future research in Section \ref{sec:conclusion}.

\section{Preliminaries} \label{sec:prelim}

Throughout this paper, $X$ denotes a non-empty finite set.\\

\noindent {\bf Phylogenetic networks.} A {\em rooted binary phylogenetic network $N_r$ on $X$} is a rooted acyclic directed graph with no loops that satisfies the following three properties:
\begin{enumerate}[(i)]
\item the (unique) root $\rho$ has in-degree zero and out-degree one; 
\item a vertex of out-degree zero has in-degree one, and the set of vertices with out-degree zero is $X$; and
\item all other vertices have either in-degree one and out-degree two, or in-degree two and out-degree one.
\end{enumerate}
The set $X$ is called the {\it leaf set} of $N_r$. As with other publications on spaces of phylogenetic networks~\cite{bordewich2017lost,Janssen2019}, we allow edges to be in parallel or, equivalently, underlying cycles of length two. A vertex with in-degree two and out-degree one is called a {\it reticulation}, and a vertex with in-degree one and out-degree two is called a {\it tree vertex}. Similarly, an edge directed into a reticulation is called a {\it reticulation edge} and each non-reticulation edge is called a {\it tree edge}. Lastly, for two vertices $u$ and $v$, we say that $u$ is a {\em parent} of $v$ and $v$ is a {\em child} of $u$ if $(u,v)$ is an edge of $N_r$. 

A rooted binary phylogenetic $X$-tree $T$ is a rooted binary phylogenetic network on $X$ with no reticulation. Furthermore, a rooted binary phylogenetic $X$-tree $T$ with $\vert X \vert=n$ is called a {\it caterpillar} if $n=1$, or if $n\ge 2$ and we can order the elements in $X$, say $x_1, x_2, \ldots, x_n$, so that $x_1$ and $x_2$ have the same parent and, for all $i\in \{2, 3, \ldots, n-1\}$, we have that $(p_{i+1}, p_i)$ is an edge in $T$, where $p_{i+1}$ and $p_i$ are the parents of $x_{i+1}$ and $x_i$, respectively. We denote such a caterpillar $T$ by $(x_1,x_2,x_3\ldots,x_n)$ or, equivalently, $(x_2, x_1, x_3,\ldots,x_n)$.

We next define a second network type that will play an important role in this paper and  has directed and undirected edges. Following the definition that is used in~\cite{SolisLemus2016}, we say that a network $N_s$ with leaf set $X$ is a {\it semi-directed binary phylogenetic network} on $X$ if it can be obtained from a rooted binary phylogenetic network $N_r$ on $X$ by deleting $\rho$, suppressing the resulting vertex of in-degree zero and out-degree two, and undirecting all tree edges, in which case $N_r$ is called a {\it rooted partner} of $N_s$. Importantly, $N_r$ is not necessarily the only rooted partner of $N_s$.

Now, let $N_s$ be a semi-directed binary phylogenetic network.  
Observe that $N_s$ may have a single (directed) loop in which case each rooted partner of $N_s$ has two edges in parallel that are incident with the (unique) child of its root. Keeping the exceptional loop in mind, we call a vertex $v$ of $N_s$ a {\it reticulation} if there either exist two edges that are directed into $v$ or $(v,v)$ is a loop. Furthermore, each edge of $N_s$ that is directed is called a {\it reticulation edge}.

Let $N_s$ and $N_s'$ be two semi-directed binary phylogenetic networks on $X$ with vertex and edge sets $V$ and $E$, and $V'$ and $E'$, respectively. Then $N_s$ and $N_s'$ are {\it isomorphic} if there is a bijection $\psi :V\rightarrow V'$ such that $\psi(x)=x$ for all $x\in X$ and $(u,v) \in E$ (resp.$\{u,v\} \in E$) if and only if $(\psi(u),\psi(v)) \in E'$ (resp. $\{\psi(u),\psi(v)\} \in E'$) for all $u,v\in V$. If $N_s$ and $N_s'$ are isomorphic, we write $N_s\cong N_s'$ and, otherwise, we write $N_s\ncong N_s'$.

For the remainder of the paper, we will refer to the two types of  rooted binary phylogenetic networks and semi-directed binary phylogenetic networks as {\it rooted phylogenetic networks} and {\it semi-directed phylogenetic networks}, respectively, as all such networks considered here are binary. Moreover, whenever we use the expression of a {\it phylogenetic network $N$} without specifying a type, then the following statement or definition applies to both types of networks. 
Additionally, in all figures except for Figure~\ref{Fig_NrNsNu}, the edges of rooted phylogenetic networks are directed down the page and we omit arrowheads. 

Let $N$ be a phylogenetic network. 
Recall that $N$ may have a loop if it is semi-directed. For $\ell\geq 1$, we refer to a sequence $v_1,v_2,\ldots,v_\ell$
of $\ell$ distinct vertices of $N$ as a {\em cycle of length $\ell$} or as an {\em $\ell$-cycle}  if $\{v_\ell,v_1\}$ and, for each $i\in\{1,2,\ldots,\ell-1\}$, $\{v_i,v_{i+1}\}$ are edges in the underlying graph of $N$. If $\ell=1$, the definition of a cycle of length one coincides with that of a loop. Furthermore, if the length of an $\ell$-cycle is irrelevant, we simply refer to it as a {\em cycle}. 
Finally, the number of reticulations of $N$ is denoted by $r(N)$.\\

\noindent {\bf Level-$1$ networks.} Let $N_r$ be a rooted phylogenetic network. Then $N_r$ is said to be {\it level-$1$} if each  cycle has length at least three and no two  cycles have a common vertex. Moreover, if $N_r$ is a rooted level-$1$ network and $v$ is a vertex of a cycle $C$ of $N$, we call $v$ the {\it source} of $C$ if no edge of $N_r$ that is directed into $v$ lies on $C$.  If, on the other hand, $v$ is the unique reticulation of $C$, then we call it the {\it sink} of $C$. Since $N_r$ is level-$1$, each  cycle of $N$ has a unique source and sink. 

Extending the definition of level-$1$ to a semi-directed phylogenetic network $N_s$, we say that $N_s$ is {\it level-$1$} if there exists a rooted partner of $N_s$ that is level-$1$. 
Notice that a semi-directed level-$1$ network may contain one pair of parallel edges. This is the case if it was obtained from a rooted level-$1$ network with the property that the unique child of the root is the source of a cycle of length three. An example of this is depicted in Figure \ref{Fig_NrNsNu}.

\begin{figure}[t]
    \centering
    \includegraphics[scale=0.3]{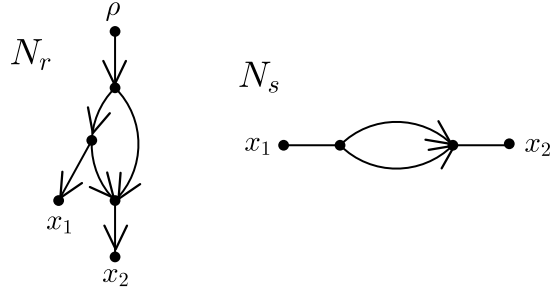}
    \caption{A semi-directed phylogenetic network $N_s$ on $X=\{x_1,x_2\}$ and a rooted partner $N_r$ of $N_s$.  
    As the child of the root of $N_r$ is the source of a cycle of length three,  $N_s$ contains a pair of parallel edges. 
    Each of $N_r$ and $N_s$ is level-$1$.}
    \label{Fig_NrNsNu}
\end{figure}

We end this section by bounding the number of reticulations in rooted and semi-directed level-$1$ networks.
\begin{lemma}\label{l:reticulations}
Let $N$ be a rooted or semi-directed level-$1$ network on $X$. Then $N$ has at most $\vert X \vert-1$ reticulations.
\end{lemma}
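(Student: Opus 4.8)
The plan is to reduce the semi-directed case to the rooted case and then to prove the rooted bound by a handshake count. For the reduction, observe that if $N$ is a semi-directed level-$1$ network on $X$, then by definition it has a level-$1$ rooted partner $N_r$ on $X$, and the operations relating $N_r$ to $N$ (deleting the root, suppressing the resulting in-degree-zero out-degree-two vertex, and undirecting all tree edges) neither create nor destroy reticulations. The same is true in the degenerate parallel-edge and single-loop situations described before Figure~\ref{Fig_NrNsNu}. Hence $r(N)=r(N_r)$, and it suffices to prove the statement for rooted level-$1$ networks.

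So let $N$ be a rooted level-$1$ network on $X$ with $n=\vert X\vert$ and $k=r(N)$. First I would record the structural fact that in a level-$1$ network the cycles are vertex-disjoint and each contains a unique reticulation (its sink), while every reticulation, having in-degree two, lies on exactly one cycle: the two incoming paths converge to a common source and vertex-disjointness forces uniqueness. Thus $N$ has exactly $k$ cycles $C_1,\dots,C_k$, of lengths $\ell_1,\dots,\ell_k$ with each $\ell_i\ge 3$. Now form a graph $T$ from the underlying undirected graph of $N$ by contracting each $C_i$ to a single vertex $c_i$ and deleting the resulting self-loops. Since each $C_i$ carries exactly one independent cycle and the $C_i$ are vertex-disjoint, contracting them reduces the number of independent cycles to zero, so $T$ is a tree; vertex-disjointness also rules out parallel edges, so $T$ is simple. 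Because $N$ is binary, every vertex of $C_i$ has degree three and uses exactly two of its edges on the cycle, so $c_i$ acquires exactly $\ell_i$ incident edges and has degree $\ell_i\ge 3$ in $T$. The remaining vertices of $T$ are the $n$ leaves and the root (all of degree one) together with the $t_0\ge 0$ tree vertices lying on no cycle (all of degree three). In particular $T$ has no vertex of degree two, so applying the tree identity $\sum_{v}(\deg_T(v)-2)=-2$ and isolating the $n+1$ degree-one vertices yields $t_0+\sum_i(\ell_i-2)=n-1$. Since each of the $k$ summands satisfies $\ell_i-2\ge 1$, this gives $k\le\sum_i(\ell_i-2)\le n-1$, as required.

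The degree bookkeeping and the verification that $T$ is a genuine simple tree are routine. I expect the main obstacle to be the reduction step's edge cases, namely confirming $r(N)=r(N_r)$ exactly when $N$ has a pair of parallel edges or the single directed loop: there the suppression of the root's child merges a reticulation edge with a tree edge (or with a second parallel reticulation edge), and one must check that this produces precisely one reticulation of $N$ for each reticulation of $N_r$, so that neither the reticulation count nor the level is altered.
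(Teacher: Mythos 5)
Your proof is correct, and it splits naturally into two parts that compare differently against the paper. The semi-directed part is exactly the paper's argument: pass to a level-$1$ rooted partner and note that deleting the root, suppressing its child, and undirecting tree edges preserves the set of reticulations (including when a pair of parallel edges is created); your worry about the single-loop case is vacuous, since a semi-directed network with a loop has only rooted partners containing a $2$-cycle and hence is never level-$1$. The rooted part, however, is genuinely different: the paper disposes of it by citation, using that every level-$1$ network is tree-child~\cite{huber2022orienting} and that tree-child networks have at most $\vert X\vert-1$ reticulations~\cite{cardona2008comparison,mcdiarmid2015counting}, whereas you give a self-contained combinatorial proof, contracting the vertex-disjoint cycles to obtain a simple tree $T$ with no degree-two vertices and applying $\sum_v (\deg_T(v)-2)=-2$ to get $t_0+\sum_i(\ell_i-2)=\vert X\vert-1$, hence $k\le\vert X\vert-1$ since each $\ell_i\ge 3$. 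The paper's route buys brevity; yours buys independence from the tree-child literature and a sharper identity that also encodes the equality case ($k=\vert X\vert-1$ forces $t_0=0$ and all $\ell_i=3$), which is essentially the content of Lemma~\ref{l:counts2}. One detail you label as routine does deserve its one-line justification, namely that $N$ has \emph{exactly} $k$ cycles: any directed path from the root to a parent of a reticulation $v$ avoids $v$ itself (otherwise the reticulation edge into $v$ would close a directed cycle), so the two reticulation edges at $v$ close up into an underlying cycle, and level-$1$ disjointness then forces each reticulation to lie on exactly one cycle and each cycle to contain exactly one reticulation.
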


\begin{proof}
First, suppose that $N$ is a rooted level-$1$ network. Then the lemma follows from~\cite{cardona2008comparison,mcdiarmid2015counting} and the fact that each level-$1$ network is also tree-child~\cite{huber2022orienting}. Second, suppose that $N$ is a semi-directed level-$1$ network. Let $N_r$ be a rooted partner of $N$ that is level-$1$. By construction, $v$ is a reticulation in $N$ if and only if $v$ is a reticulation in $N_r$. As, $N_r$ has at most $\vert X \vert-1$ reticulations, so does $N$.
\end{proof}

\section{Cut edge transfers}\label{sec:CET}
In this section we introduce a new rearrangement operation that can be applied to phylogenetic networks and that will play a crucial role in establishing that the space of semi-directed level-$1$ networks on a fixed leaf set is connected.

Let $N$ be a phylogenetic network, and let $e$ be an edge of $N$. Recalling that $N$ is binary, $e$ is called a {\it cut edge} of $N$ if the deletion of $e$ from $N$ results in a graph with exactly  two connected components.

Now let $N_r$ be a rooted phylogenetic network, and let $e=(u,v)$ be a cut edge of $N_r$ such that $e$ is not incident with $\rho$ and $u$ is not a reticulation. Obtain a network $N_r'$ from $N_r$ by deleting $e$, suppressing $u$, subdividing an edge of the connected component that contains $\rho$ with a new vertex $u'$, and adding a new edge $(u',v)$. Clearly, $N_r'$ is a rooted phylogenetic network on $X$. If $N_r\ncong N_r'$, we say that $N_r'$ is obtained from $N_r$ by a single {\it cut edge transfer (CET)}. Furthermore, if $N_r'$ can be obtained from $N_r$ by a single CET, then conversely $N_r$ can also be obtained from $N_r'$ by the single CET that reverses the roles of $u$ and $u'$. Hence, any CET is reversible. Lastly, if $N_r$ is a rooted phylogenetic $X$-tree, then CETs coincide with rooted subtree prune and regraft (rSPR) operations~\cite{bordewich2005computational}.

We now turn to semi-directed phylogenetic networks and establish a relationship between cut edges and reticulation edges of such networks. 

\begin{lemma}\label{l:ret-edges}
Let $N_s$ be a semi-directed phylogenetic network, and let $e$ be an edge of $N_s$. If $e$ is a reticulation edge of $N_s$, then $e$ is an edge of a cycle in $N_s$. Moreover, no cut edge of $N_s$ is a reticulation edge. 
\end{lemma}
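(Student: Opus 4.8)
The plan is to prove the two assertions in turn, working with a rooted partner $N_r$ of $N_s$ and translating between the two network types. Throughout, I would keep in mind that the reticulation edges of $N_s$ are exactly the directed edges, and that these correspond to the reticulation edges of any rooted partner $N_r$ (since the construction of $N_s$ from $N_r$ only undirects the tree edges and suppresses the root, leaving the directions of edges into reticulations intact).

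For the first claim, suppose $e=(u,v)$ is a reticulation edge of $N_s$, so $v$ is a reticulation. If $v$ carries a loop, then $e$ itself is a $1$-cycle and we are done, so assume $v$ is a genuine in-degree-two reticulation with two distinct reticulation edges $e=(u,v)$ and $f=(w,v)$ entering it. The natural approach is to exhibit a cycle in the underlying graph of $N_s$ through $e$. First I would pass to a rooted partner $N_r$, in which $u$, $v$, $w$ are also vertices with $v$ a reticulation having parents $u$ and $w$. In the rooted acyclic graph $N_r$, both $u$ and $w$ have a common descendant (namely $v$) and must share a common ancestor, so tracing upward from $u$ and from $w$ until the paths first meet gives two internally disjoint directed paths from a common ancestor down to $v$; together these form an undirected cycle through $v$, and in particular through the edge $e$. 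Since the operation producing $N_s$ from $N_r$ preserves the underlying graph (up to suppressing the root), this cycle descends to a cycle of $N_s$ containing $e$. I would note that the level-$1$ hypothesis is \emph{not} needed here; this holds for arbitrary semi-directed networks.

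For the second claim, I would argue by contradiction: suppose some cut edge $e$ of $N_s$ is a reticulation edge. By the first part, $e$ lies on a cycle $C$ of $N_s$. But an edge lying on a cycle cannot be a cut edge, since deleting it leaves the remaining edges of the cycle as a path connecting its two endpoints, so the graph stays connected and the deletion does not increase the number of connected components. This contradicts the definition of a cut edge (whose deletion must produce exactly two components). Hence no cut edge is a reticulation edge.

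The main obstacle I anticipate is the careful handling of the loop and parallel-edge exceptions permitted for semi-directed networks, together with making the first part fully rigorous: I need to verify that tracing ancestors of the two parents of $v$ in $N_r$ genuinely yields two distinct paths that close up into a cycle rather than retracing a single edge, and that this cycle survives the passage from $N_r$ to $N_s$. For the parallel-edge case this amounts to observing that two parallel edges already form a $2$-cycle through $e$; for the loop case the edge is itself a $1$-cycle. Once these degenerate configurations are dispatched, the generic case reduces to the standard graph-theoretic fact that a reticulation's two in-edges, combined with the two ancestral paths up to their least common ancestor, bound a cycle.
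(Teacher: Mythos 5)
Your proposal is correct and follows essentially the same route as the paper's proof: pass to a rooted partner $N_r$, observe that a reticulation edge lies on a cycle of $N_r$, note that this cycle survives the construction of $N_s$ from $N_r$, and conclude that a cut edge (which lies on no cycle) cannot be a reticulation edge. The only difference is one of detail: you explicitly carry out the common-ancestor/two-disjoint-paths argument and the loop and parallel-edge cases, which the paper simply asserts implicitly.
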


\begin{proof}
Let $N_r$ be a rooted partner of $N_s$. Suppose that $e$ is a reticulation edge of $N_s$. By construction of $N_s$ from $N_r$, it follows that, as $e$ is an edge of a cycle in $N_r$, $e$ is also an edge of a cycle in $N_s$. Now, let $f$ be a cut edge of $N_s$. Since $f$ is not an edge of a cycle, $f$ is not a reticulation edge of $N_s$.
\end{proof}

We next extend the definition of a CET to semi-directed phylogenetic networks. To this end, we first establish a lemma that pinpoints the relationship between cut edges of a semi-directed phylogenetic network and those of a rooted partner.

\begin{lemma}\label{l:cut-edge}
Let $N_s$ be a semi-directed phylogenetic network, and let $N_r$ be a rooted partner of $N_s$ with root $\rho$. Let $u$ and $v$ be two vertices of $N_s$. Then $e=\{u,v\}$ is a cut edge of $N_s$ if and only if exactly one of the following two conditions applies:
\begin{enumerate}[(i)]
\item $(u,v)$ or $(v,u)$ is a cut edge of $N_r$, or
\item $(\rho,t)$, $(t,u)$, and $(t,v)$ are cut edges of $N_r$, where $t$ is the unique child of $\rho$.
\end{enumerate}
\end{lemma}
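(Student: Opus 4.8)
The plan is to prove the biconditional by carefully tracking how the operations that transform $N_r$ into $N_s$ (deleting $\rho$, suppressing the resulting degree-two vertex $t$, and undirecting tree edges) affect the cut-edge structure. The key observation I would establish first is that deleting an edge from a graph and counting connected components is essentially combinatorial, so I can relate the components of $N_s - e$ to those of $N_r$ after the corresponding edges are removed. Undirecting edges never changes connectivity, so I may freely work with the underlying undirected graphs of both $N_r$ and $N_s$ throughout; the only genuine structural change between the underlying graphs of $N_r$ and $N_s$ is the deletion of $\rho$ and the suppression of $t$ (i.e., replacing the two edges $\{t,u\}$ and $\{t,v\}$ at the degree-two vertex $t$ by a single edge $\{u,v\}$, where $u$ and $v$ are the two neighbors of $t$ other than $\rho$).

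For the backward direction, I would argue each case separately. In case (i), if $(u,v)$ or $(v,u)$ is a cut edge of $N_r$ and this edge is \emph{not} one of the three special edges incident with $\rho$ or $t$, then it survives the construction of $N_s$ as the undirected edge $\{u,v\}$, and deleting it disconnects $N_s$ exactly as it disconnects the underlying graph of $N_r$ (the pendant piece containing $\rho$ and $t$ in $N_r$ simply loses its two now-useless vertices). In case (ii), the three edges $(\rho,t),(t,u),(t,v)$ being cut edges of $N_r$ means $t$ together with $\rho$ forms a small pendant structure; after suppressing $t$ and deleting $\rho$, the two edges $\{t,u\}$ and $\{t,v\}$ merge into the single edge $\{u,v\}$ of $N_s$, and deleting this merged edge separates the two subtrees hanging off $u$ and $v$ — so $\{u,v\}$ is a cut edge of $N_s$. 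The forward direction is the converse: given a cut edge $\{u,v\}$ of $N_s$, I would lift it back to $N_r$. Either $\{u,v\}$ is an edge that already existed in $N_r$ (undirected from a tree edge or being the undirected image of a directed edge), giving case (i); or $\{u,v\}$ is the newly created edge that arose from suppressing $t$, in which case the two original edges $\{t,u\},\{t,v\}$ together with $\{\rho,t\}$ must all have been cut edges of $N_r$, giving case (ii). I would also need to verify the exclusivity asserted by ``exactly one,'' i.e., that cases (i) and (ii) cannot both hold for the same $\{u,v\}$; this follows because in case (ii) there is literally no edge $(u,v)$ or $(v,u)$ present in $N_r$ (the suppression is what created it), so (i) fails.

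The main obstacle I anticipate is handling the edge-deletion-and-component bookkeeping cleanly, particularly around the suppressed vertex $t$ and the exceptional loop cases. The subtle point is that when I delete the image edge in $N_s$ and compare with deleting edges in $N_r$, I must make sure the piece of $N_r$ containing $\rho$ and $t$ does not spuriously reconnect the two components or contribute a third component; this is where the precise hypotheses that $(\rho,t)$ and both $(t,u),(t,v)$ are \emph{cut edges} (rather than, say, lying on a cycle) do the work. I would phrase this via the fact that in a binary network $t$ has degree exactly three in the underlying graph of $N_r$, so removing $t$ and reconnecting its two non-root neighbors by the edge $\{u,v\}$ is a genuine suppression, and the connectivity of $N_s - \{u,v\}$ equals the connectivity of $N_r$ with $\{t,u\}$ and $\{t,v\}$ both removed and $\rho, t$ deleted. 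A careful statement that ``deleting a cut edge yields two components'' is preserved under suppression of a degree-two vertex lying on that edge would let me reduce both directions to a single transparent lemma about suppression, which I expect to be the crux of a fully rigorous write-up.
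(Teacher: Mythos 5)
Your proposal is correct and follows essentially the same route as the paper's proof: both directions are obtained by tracking how the construction of $N_s$ from $N_r$ (deleting $\rho$, suppressing the child $t$ of the root, undirecting tree edges) interacts with cut edges, with the edge created by suppressing $t$ accounting exactly for case (ii). The paper's write-up is simply terser, compressing your connectivity/suppression bookkeeping and the mutual-exclusivity check into appeals to \enquote{by construction}.
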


\begin{proof}
Let $t$ be the unique child of $\rho$ in $N_r$. By construction of $N_s$ from $N_r$ it follows that $\{u,v\}\cap\{\rho,t\}=\emptyset$.
First, suppose that $e=\{u,v\}$ is a cut edge of $N_s$. If (i) does not apply, then, by construction of $N_s$ from $N_r$, it follows that neither $(u,v)$ nor $(v,u)$ is an edge of $N_r$. Hence, $t$ is the parent of each of $u$ and $v$ in $N_r$; thereby implying that (ii) holds. 

Second, suppose that one of (i) and (ii) applies. Clearly, if (i) applies, then $\{u,v\}$ is a cut edge of $N_s$. On the other hand, if (ii) applies, then it again follows from the construction of $N_s$ from $N_r$ that $\{u,v\}$ is a cut edge of $N_s$.
\end{proof}

Now, let $N_s$ be a semi-directed phylogenetic network on $X$.
Let $e=\{u,v\}$ be a cut edge of 
$N_s$ such that $u$ is not a reticulation and there exists a rooted partner $N_r$ of $N_s$ that satisfies one of the following two conditions.
\begin{enumerate}
\item $u$ is the parent of $v$ in $N_r$ or
\item there exist three cut edges $(\rho,t)$, $(t,u)$, and $(t,v)$ in $N_r$.
\end{enumerate}
Observe that, by Lemma~\ref{l:cut-edge}, these are the only two possibilities.
Then obtain a network $N_s'$ from $N_s$ by deleting $e$, suppressing $u$,
subdividing an edge of the connected component that does not contain $v$ with a new vertex $u'$, and adding a new edge $\{u',v\}$. If $u'$ subdivides a loop $(w,w)$ of $N_s$, then the resulting undirected edge $\{u',w\}$ is additionally directed towards $w$ so that $N_s'$ has two  parallel edges $(u',w)$. To see that $N_s'$ is a semi-directed phylogenetic network, observe the following. 
If the connected component containing $v$ does not contain any cycle, then the operation described above clearly preserves the fact that the edges of the resulting graph can be directed to yield a rooted phylogenetic network, which implies that $N_s'$ has a rooted partner. If, on the other hand, the connected component containing $v$ contains a cycle, then, by the choice of $u$ and $v$, there exists a rooted partner $N_r$ of $N_s$ satisfying Conditions 1. or 2. given above. In particular, all edges in the connected component of $N_s$ that contains $v$, must be directed away from $v$ in $N_r$. So again, the described operation results in a graph that can  be directed to yield a rooted phylogenetic network, implying that, in both cases, $N_s'$ is a semi-directed phylogenetic network. If $N_s\ncong N_s'$, we say that $N_s'$ is obtained from $N_s$ by a single {\it cut edge transfer (CET)}. Similar to the rooted case, if $N_s'$ can be obtained from $N_s$ by a single CET, then conversely $N_s$ can also be obtained from $N_s'$ by a single CET. 

We remark that carefully choosing a cut edge $e=\{u,v\}$ in the definition of a CET is crucial to ensure that the CET results in a semi-directed phylogenetic network. For arbitrary choices of $u$ and $v$, a CET may result in a graph that is not a semi-directed phylogenetic network. An example is depicted in Figure \ref{fig:CETsemidirected}. In this example, $N_s'$ is obtained from $N_s$ by a single CET that is applied to the cut edge $\{u,v\}$. This is a valid operation because there exists a rooted partner $N_r$ of $N_s$ such that $u$ is a parent of $v$ in $N_r$. However, in this example, the roles of $u$ and $v$ cannot be interchanged (i.e., we cannot suppress $v$ while keeping $u$) because there exists no rooted partner of $N_s$ such that $v$ is a parent of $u$ or each of $(\rho,t)$, $(t,u)$, and $(t,v)$ are cut edges, where $t$ is the child of $\rho$.

\begin{figure}[t]
    \centering
  \includegraphics[scale=0.15]{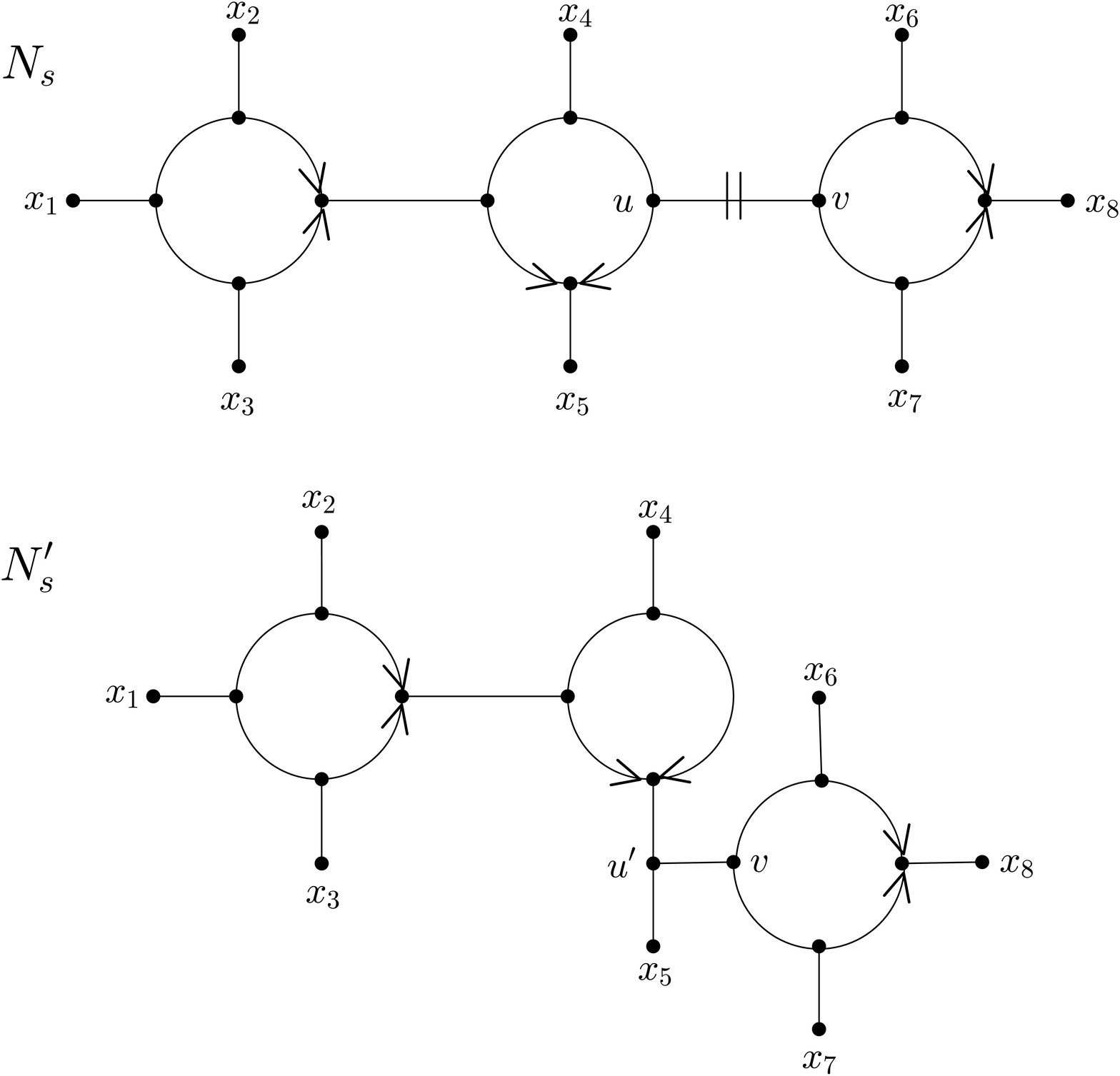}
    \caption{A semi-directed phylogenetic network $N_s$ with cut edge $e=\{u,v\}$. It can easily be checked that there exists a rooted partner of $N_s$ with $u$ being a parent of $v$. Deleting $e$, suppressing $u$, subdividing an edge of the connected component that does not contain $v$ with a new vertex $u'$, and adding a new edge $\{u',v\}$ is thus a valid CET and the semi-directed phylogenetic network $N_s'$ is obtained from $N_s$ by one such operation.} 
    \label{fig:CETsemidirected}
\end{figure}

We end this section, with several definitions that will be used throughout the remaining sections and that apply to rooted as well as to semi-directed phylogenetic networks. We call a sequence $N_0,N_1,N_2,\ldots,N_m$ of rooted phylogenetic networks on $X$ or of semi-directed phylogenetic networks on $X$ a {\em CET sequence of length $m$} if each $N_i$ with $i\in\{1,2,\ldots,m\}$ can be obtained from $N_{i-1}$ by a single CET. Let $C$ be a space of  phylogenetic networks on $X$. We say that $C$ is {\it connected} under CET if, for any pair $N$ and $N'$ of networks in $C$, there exists a CET sequence that transforms $N$ into $N'$ and every network in the sequence is in $C$. Suppose that $C$ is connected under CET. Then the {\it CET distance} between two phylogenetic networks $N$ and $N'$ in $C$ is the minimum length of a CET sequence that connects $N$ and $N'$, where every network in the sequence is in $C$. Furthermore, the {\em diameter} of $C$ under CET is the maximum CET distance over all pairs of phylogenetic networks in $C$.

\section{Connectedness of rooted level-$1$ networks}\label{sec:rooted}
In this section, we establish connectedness results under CET for spaces of rooted level-$1$ networks that have a fixed number of reticulations. These results are then used in the next section to establish analogous connectedness results for spaces of semi-directed level-$1$ networks. As we will see, almost all work goes into proving connectedness for rooted level-$1$ networks. Once the results of this section are in place, connectedness for spaces of semi-directed level-$1$ networks follows relatively easily by considering semi-directed level-$1$ networks and their rooted partners.

We start with a lemma on the number of tree vertices in a rooted phylogenetic network followed by a lemma that investigates level-$1$ networks whose cycles all have length three. To this end, recall that the root of a rooted phylogenetic network has in-degree zero and out-degree one. By translating Lemma 2.1 of~\cite{mcdiarmid2015counting} into the language of the present paper, we have the following result. 

\begin{lemma}\label{l:counts1}
Let $N_r$ be a rooted phylogenetic network on $X$. Let $k$ be the number of reticulations in $N_r$, and let $t$ be the number of tree vertices of $N_r$. Then $t=k+\vert X \vert-1$.
\end{lemma}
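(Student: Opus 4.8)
The plan is to count edges in two different ways and exploit the fact that each vertex type has a prescribed in-degree and out-degree. First I would introduce notation for the various vertex counts: let $\rho$ denote the root (there is exactly one), let $k$ be the number of reticulations, let $t$ be the number of tree vertices, and recall that the leaves number $|X|$ and each has in-degree one and out-degree zero. Every vertex of $N_r$ falls into exactly one of these four categories, so the total vertex count is $1 + k + t + |X|$.

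Next I would compute the number of edges $|E|$ in two ways, once by summing out-degrees over all vertices and once by summing in-degrees, using the degree constraints from properties (i)--(iii) in the definition of a rooted binary phylogenetic network. Summing out-degrees: the root contributes $1$, each leaf contributes $0$, each tree vertex contributes $2$, and each reticulation contributes $1$, so
\begin{equation*}
|E| = 1 + 0\cdot|X| + 2t + 1\cdot k = 1 + 2t + k.
\end{equation*}
Summing in-degrees: the root contributes $0$, each leaf contributes $1$, each tree vertex contributes $1$, and each reticulation contributes $2$, so
\begin{equation*}
|E| = 0 + 1\cdot|X| + 1\cdot t + 2k = |X| + t + 2k.
\end{equation*}

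Finally I would equate these two expressions for $|E|$, obtaining $1 + 2t + k = |X| + t + 2k$, which rearranges directly to $t = k + |X| - 1$, as claimed. Since this is a straightforward double-counting argument with no case analysis, there is no real obstacle; the only point requiring a little care is making sure the root is treated correctly (it is neither a tree vertex nor a leaf, contributing out-degree one and in-degree zero) and that each of the four vertex classes is exhaustive and mutually exclusive, which follows immediately from the defining degree conditions.
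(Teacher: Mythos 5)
Your proof is correct. One thing worth knowing: the paper does not actually prove this lemma itself --- it imports it by citing Lemma~2.1 of \cite{mcdiarmid2015counting} and ``translating'' it into the present terminology. So your argument is genuinely different in the sense that it is a self-contained, elementary derivation where the paper relies on an external result. Your double count is sound under the paper's definitions: summing out-degrees gives $|E| = 1 + 2t + k$ and summing in-degrees gives $|E| = |X| + t + 2k$, and equating yields $t = k + |X| - 1$. You also handle the two points where this particular definition could trip someone up: the root here has out-degree \emph{one} (not two, as in some other conventions), and it belongs to none of the other three vertex classes; moreover, the paper explicitly permits parallel edges, which is harmless for your argument since degree sums are unaffected by whether the multigraph is simple. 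The trade-off is the usual one: the citation keeps the paper short and defers to a source where the count is established in greater generality, while your version makes the statement verifiable on the spot from properties (i)--(iii) of the definition alone.
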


\begin{lemma}\label{l:counts2}
Let $N_r$ be a rooted level-$1$ network on $X$ with root $\rho$ such that each cycle has length three. Suppose that $N_r$ has exactly $k$ reticulations. Then each reticulation and tree vertex of $N_r$ is a vertex of a cycle if and only if $k=\vert X \vert-1$.
\end{lemma}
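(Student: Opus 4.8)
The plan is to set up a bijection between the cycles of $N_r$ and its reticulations, count the vertices lying on cycles, and compare with the global vertex counts supplied by Lemma~\ref{l:counts1}. First I would record the vertex types occurring on a single $3$-cycle $C$. Since $N_r$ is level-$1$, $C$ has a unique source $s$ and a unique sink (reticulation) $r$, and being a $3$-cycle it has exactly one further vertex $m$; moreover each vertex of $C$ meets exactly two cycle-edges, so $s$ has both directed away from it, $r$ has both directed into it, and $m$ has one incoming and one outgoing cycle-edge. None of $s,m,r$ can be the root or a leaf, since every vertex on a cycle has degree at least two in the underlying graph while the root and each leaf have degree one; hence each of $s,m,r$ is a tree vertex or a reticulation. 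As $N_r$ has no degree-$2$ vertices, $m$ carries a third, off-cycle edge; were this an incoming edge, $m$ would be a second reticulation on $C$, contradicting uniqueness of the sink. Thus $m$ and $s$ are tree vertices and $r$ is a reticulation, so every $3$-cycle contributes exactly two tree vertices and one reticulation.

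Next I would argue that the map sending each cycle to its sink is a bijection onto the set of reticulations. It is injective because cycles are vertex-disjoint in a level-$1$ network, so a reticulation lies on at most one cycle; it is surjective because every reticulation, having in-degree two, lies on a cycle of the underlying graph (trace the two reticulation edges back to a common ancestor to exhibit one). Consequently $N_r$ has exactly $k$ cycles, all of length three and pairwise vertex-disjoint, so the vertices lying on some cycle are precisely $3k$ in number: the $k$ reticulations together with $2k$ distinct tree vertices.

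I would then finish by counting. Because every reticulation lies on a cycle, the statement ``each reticulation and tree vertex of $N_r$ is a vertex of a cycle'' is equivalent to ``every tree vertex of $N_r$ lies on a cycle.'' There are exactly $2k$ tree vertices on cycles, whereas by Lemma~\ref{l:counts1} there are $t=k+\vert X\vert-1$ tree vertices in total; the former are a subset of the latter, so all tree vertices lie on cycles if and only if $2k=k+\vert X\vert-1$, that is, if and only if $k=\vert X\vert-1$. (When $k<\vert X\vert-1$ one has $t>2k$, so some tree vertex lies off every cycle, which shows the biconditional is tight.)

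The step I expect to be the main obstacle is the first one: establishing that each $3$-cycle contributes exactly two tree vertices and one reticulation, in particular ruling out that the middle vertex $m$ is itself a reticulation. This rests on the uniqueness of the reticulation (sink) on each cycle, which is a structural feature of level-$1$ networks arising from vertex-disjointness of cycles in a rooted acyclic graph; once this is secured, everything afterwards is bookkeeping on top of Lemma~\ref{l:counts1}.
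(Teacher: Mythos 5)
Your proof is correct and takes essentially the same route as the paper: both arguments compare the $3k$ vertices lying on the $k$ vertex-disjoint $3$-cycles with the total count $k+t = 2k+\vert X\vert -1$ of reticulations and tree vertices supplied by Lemma~\ref{l:counts1}, and conclude equality holds precisely when $k=\vert X\vert-1$. The only difference is presentational: you spell out the cycle--reticulation bijection and the fact that each $3$-cycle carries exactly one reticulation and two tree vertices, details the paper leaves implicit (it instead invokes Lemma~\ref{l:ret-edges} to conclude that any off-cycle vertex is a tree vertex).
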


\begin{proof}
Let $t$ be the number of tree vertices of $N_r$. By Lemma~\ref{l:reticulations}, $N_r$ has at most $\vert X \vert-1$ reticulations. Furthermore, by Lemma~\ref{l:counts1}, the number of reticulations and tree vertices of $N_r$  is 
\begin{equation}\label{eq1}
k+t=k+k+\vert X \vert-1.
\end{equation}

First, assume that $k=\vert X \vert-1$. Then, Equation~(\ref{eq1}) simplifies to $k+t=3k$. Since each cycle of $N_r$ has length three, it follows that each reticulation and each tree vertex of $N_r$ is a vertex of a cycle. 

Second, assume that $k<\vert X \vert-1$. Using again Equation~(\ref{eq1}), we have $k+t>3k$. Hence, there exists a vertex $v$ in $N_r$ that is not a vertex of a cycle. By Lemma~\ref{l:ret-edges}, $v$ is a tree vertex.
\end{proof}

We now introduce what we call the {\em standard form} of a rooted level-$1$ network with precisely $k$ reticulations. This network will play a crucial role in what follows since each rooted level-$1$ network with precisely $k$ reticulations can be transformed into it by using a sequence of CETs. 
Let $N_r$ be a rooted level-$1$ network on $X$ with precisely $k$ reticulations and $\vert X \vert=n$. We say that $N_r$ is in {\it standard form} if, either $k=0$ and $N_r$ is a caterpillar, or, if $k \geq 1$ and  $N_r$ has the following properties:
\begin{enumerate}[(i)]
    \item $N_r$ contains precisely $k$ $3$-cycles. For each such cycle $C_i$ with $i\in \{1,2, \ldots, k$\}, we denote its source by $u_i$, its sink by $v_i$, and its third vertex by $p_i$.
    \item Each vertex $p_i$ is the parent of leaf $x_i$ for each $i\in\{1,2, \ldots, k\}$. 
    \item Vertex $u_1$ is the child of the root of $N_r$, and $N_r$ contains the edges $(v_i,u_{i+1})$ for each $i\in\{1,2. \ldots, k-1\}$.
    \item Leaves $x_{k+1},x_{k+2} \ldots, x_n$ are the leaves of a caterpillar $T$, such that:
    \begin{enumerate}[(a)]
        \item If $n=k+1$, leaf $x_n$ is the only leaf of $T$ and $N_r$ contains the edge $(v_k,x_n)$;
        \item If $n > k+1$, leaves $x_{k+1}, \ldots, x_n$ of $T$ are ordered such that $x_{k+1}$ and $x_{k+2}$ have the same parent and, for all $i \in \{k+2, k+3 \ldots, n-1\}$, we have that $(p_{i+1},p_i)$ is an edge in $N_r$, where $p_{i+1}$ and $p_i$ are the parents of $x_{i+1}$ and $x_i$, respectively, and such that $N_r$ contains the edge $(v_k,p_{n})$.
    \end{enumerate}
    Note that since a rooted level-$1$ network on $X$ has at most $\vert X \vert-1$ reticulations, i.e., $k \leq n-1$, we always have $n \geq k+1$, and thus one of (a) and (b) must occur.
\end{enumerate}
A generic example of a rooted level-$1$ network in standard form is depicted in Figure \ref{fig:standardform}. For fixed $X$ and fixed $k$, there is a unique rooted level-$1$ network of standard form. Continuing on from the definition of a network of standard form, we say that a rooted level-$1$ networks is of {\it standard shape} if it only differs from a network in standard form by a permutation of its leaf labels. 

\begin{figure}[t]
    \centering
    \includegraphics[scale=0.25]{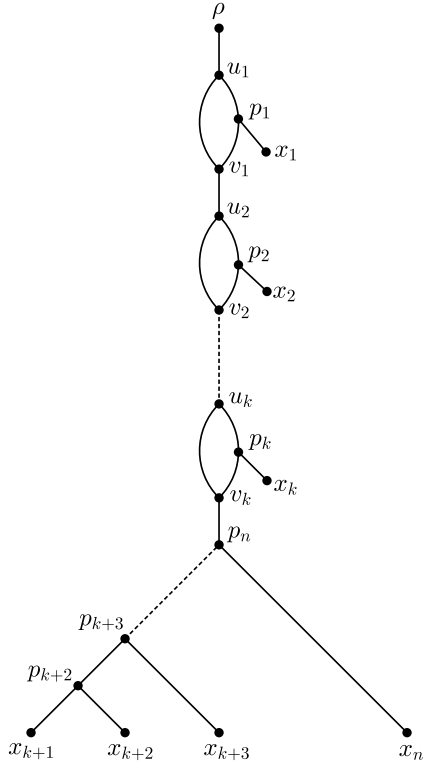}
    \caption{The rooted level-1 network on $X=\{x_1,x_2, \ldots, x_n\}$ with precisely $k$ reticulations in standard form.}
    \label{fig:standardform}
\end{figure}

To state the next lemma, we generalize the notion of level-$1$ networks. A rooted phylogenetic network on $X$ is called {\it almost level-$1$} if it has at most one 2-cycle, all other cycles have length at least three, and no two cycles have a common vertex. Moreover, we say that the space of rooted level-$1$ networks with exactly $k$ reticulations is {\it weakly connected} under CET, if, for all rooted  level-$1$ networks with exactly $k$ reticulations, $N_r$ and $N_r'$ say, there is a CET sequence connecting $N_r$ and $N_r'$ whereby every  network in the sequence is a rooted almost level-$1$ network. Now, let $N_r$ be an almost level-$1$ network on $X$. For $k\geq 1$, we say that a collection of $k$ cycles forms a \emph{chain of length $k$} of $N_r$ if there is an ordering $(C_1,C_2, \ldots, C_k)$ of these cycles such that the path from $\rho$ to $u_1$ contains only tree vertices, where $u_1$ is the source of $C_1$, and, for each $i\in\{1,2,\ldots, k-1\}$, $v_i$ is an ancestor of each vertex in $\{v_{i+1}, v_{i+2},\ldots, v_k\}$, where $v_i$ denotes the sink of $C_i$.

\begin{lemma}\label{l:standardshape}
Let $N_r$ be a rooted level-$1$ network on $X$ with precisely $k$ reticulations. Then, there exists a CET sequence of length at most $2\vert X \vert+2k$  that transforms $N_r$ into a rooted level-$1$ network $N_r^*$ on $X$ with $k$ reticulations of standard shape, whereby 
\begin{enumerate}[(i)]
    \item if $k \leq \vert X \vert-2$, every  network in the sequence is a rooted level-$1$ network on $X$ with precisely $k$ reticulations;
    \item if $k = \vert X \vert-1$, every  network in the sequence is a rooted almost level-$1$ network on $X$ with precisely $k$ reticulations.
\end{enumerate}
\end{lemma}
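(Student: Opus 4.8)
The plan is to produce the CET sequence in two stages: first shorten every cycle to a $3$-cycle, and then comb the resulting network (all of whose blocks are $3$-cycles) into standard shape, arranging the cycles into a chain and the remaining leaves into a caterpillar. Two facts make the reticulation count easy to control throughout. By Lemma~\ref{l:ret-edges} a cut edge is never a reticulation edge, and the tail of a pruned cut edge is required to be a tree vertex; hence a single CET neither creates nor destroys reticulations, and since a $2$-cycle still carries exactly one reticulation, every network in the sequence I build automatically has precisely $k$ reticulations. I will also repeatedly use the classical $k=0$ case, where CET coincides with rSPR: any rooted binary tree on a set $Y$ can be combed into a caterpillar using at most $|Y|-2$ rSPR moves through trees.

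For the first stage I would repeatedly take a cycle $C$ of length at least four and prune a pendant subnetwork hanging off one of its intermediate (non-source, non-sink) vertices, regrafting that subnetwork elsewhere; suppressing the now degree-two intermediate vertex shortens $C$ by one while leaving a cycle of length at least three, so no $2$-cycle is created and the network stays level-$1$ with $k$ reticulations. A count using Lemma~\ref{l:counts1} shows that the total number of intermediate vertices to be removed is at most $|X|-1-k$, so this stage costs at most $|X|-1-k$ CETs. By Lemma~\ref{l:counts2}, if $k=|X|-1$ this stage is in fact vacuous: the network is forced to consist entirely of $3$-cycles, each already carrying a single pendant leaf.

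For the second stage I would treat each $3$-cycle together with its pendant leaf as a movable gadget (it can be pruned at the tree edge entering its source) and comb these gadgets and the free leaves into the chain-plus-caterpillar shape of standard form, mimicking the tree-combing above with a fixed processing order so that each of the $k$ gadgets and each of the $|X|$ leaves is relocated only a bounded number of times; this keeps the total within the claimed bound $2|X|+2k$. The behaviour here splits exactly along Lemma~\ref{l:counts2}. If $k\le |X|-2$, there is always a tree vertex lying on no cycle, hence a cut edge off every cycle available for regrafting, and because we always leave each cycle's third vertex carrying a leaf (pruning only material strictly below it) we never collapse a $3$-cycle; thus every intermediate network is level-$1$, giving case~(i). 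If $k=|X|-1$ the network is saturated and completely rigid: the edge entering a cycle's source has a reticulation as its tail and so cannot be used for a CET, and the only admissible cut edges are the pendant leaf edges. Reordering the chain then unavoidably proceeds by pruning a cycle's pendant leaf -- which suppresses its third vertex and momentarily creates a single $2$-cycle -- restructuring, and reattaching the leaf to restore a $3$-cycle, which is precisely why case~(ii) only asserts an \emph{almost} level-$1$ sequence.

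I expect the main obstacle to be the saturated case~(ii). Since almost level-$1$ permits at most one $2$-cycle at any instant, the whole reordering must be organized so that each transient $2$-cycle is created and resolved before the next is introduced; showing that the chain of $3$-cycles can always be permuted into the standard order under this one-at-a-time restriction, and bounding the number of moves this takes, is the delicate part. Everything else (the shrinking stage, the non-saturated combing, and the accounting that keeps the total below $2|X|+2k$) is routine by comparison. A clean way to present the reordering is inductively: peel off one $3$-cycle, place it at the top of the chain so that its source becomes the child of the root, resolve its $2$-cycle, and then recurse on the network hanging below its sink, which has $k-1$ reticulations on $|X|-1$ leaves. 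Here $k-1\le (|X|-1)-2$ in case~(i) and $k-1=(|X|-1)-1$ in case~(ii), so the case distinction is preserved by the recursion, and sequencing the moves guarantees that a $2$-cycle arising inside the recursive subnetwork is the only one present in the ambient network.
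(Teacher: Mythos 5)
Your two-stage skeleton (shrink every cycle to a $3$-cycle, then comb the $3$-cycles into a chain and the remaining leaves into a caterpillar) is exactly the paper's, and your first stage is sound---your count of at most $\vert X\vert-1-k$ CETs via Lemma~\ref{l:counts1} is in fact tighter than the paper's bound. The genuine gap is in the second stage, and it is not confined to the saturated case you flag. Your only mechanism for relocating a $3$-cycle is to ``prune at the tree edge entering its source,'' but after stage one a $3$-cycle need not carry a pendant leaf: the off-cycle child of an intermediate vertex or of a sink may be the source of another cycle. Consequently the tail of the edge entering a cycle's source may be a reticulation, in which case the definition of CET forbids pruning that edge at all; or it may be the intermediate vertex of another $3$-cycle, in which case pruning collapses that cycle into a $2$-cycle---precisely what case~(i) must avoid. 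Your proposed safeguard (``pruning only material strictly below each cycle's third vertex'') does not repair this; it actually contradicts your mechanism, since it forbids the very prunes needed to move a gadget. The paper's proof turns on a three-CET maneuver that your proposal is missing: when $k\leq\vert X\vert-2$, take the off-cycle tree vertex $t$ guaranteed by Lemma~\ref{l:counts2}, first regraft one pendant subtree of $t$ onto the edge entering the source (creating a fresh tail that is neither a reticulation nor on a cycle), only then prune at this buffer vertex (suppressing it cannot create a $2$-cycle), regraft at the root edge, and finish with a third CET that re-hangs the displaced material below the moved cycle's sink---which also regenerates a free off-cycle tree vertex for the next round. So the ``non-saturated combing'' you call routine is exactly where the main idea of the proof lives.

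For case~(ii) you explicitly leave the crux unproven (permuting the chain while never holding more than one transient $2$-cycle), so by your own account the proposal is incomplete there. Note also that the paper's bookkeeping differs from your ``create one $2$-cycle, resolve it, create the next'' plan: since every tree vertex then lies on a $3$-cycle, the paper performs the maneuver above without its first CET once, necessarily creating a single pair of parallel edges, lets that one $2$-cycle persist throughout the remaining chain-building (the third CET of each round supplies the free tree vertex needed to run the case-(i) maneuver for all later cycles without creating further parallel edges), and repairs the $2$-cycle only at the very end by a single CET that moves a leaf onto one of its edges during caterpillar formation. Finally, your accounting (``each gadget and each leaf is relocated a bounded number of times'') is never pinned down against the stated bound $2\vert X\vert+2k$, whereas the paper's bound comes from explicit per-phase costs of $\vert X\vert$, $3k$, and $\vert X\vert-k$.
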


\noindent The high-level idea of the proof is the following: Given a rooted level-$1$ network $N_r$ with $k \geq 1$ cycles that is not of standard shape, we first transform all cycles into 3-cycles. We then arrange these 3-cycles into a chain of length $k$ and finish the transformation by moving individual leaves.

\begin{proof}[Proof of Lemma \ref{l:standardshape}]
   If $N_r$ is already in standard shape, there is nothing to show. Else, let $C_1,C_2, \ldots, C_k$ denote the cycles of $N_r$ with $k\geq 0$, 
   and let $u_i$ denote the source and $v_i$ the sink of $C_i$ for each $i \in \{1,2, \ldots, k\}$. In what follows, we generate a CET sequence of rooted almost level-$1$ networks on $X$ whereby each network in the sequence has precisely $k$ cycles. Although the length of a cycle $C_i$ may change throughout the sequence, its sink remains $v_i$. For each network in the sequence, we therefore refer to the cycle with sink $v_i$ as cycle $C_i$. 
   
   Let $(C_1, C_2, \ldots, C_k)$ be an ordering on the cycles in $N_r$ such that $C_i$ precedes $C_j$ if $u_i$ is a descendant of $u_j$ for $i < j$. 
   For each $i\in\{1,2,\ldots,k\}$ in order, we now apply a sequence of CETs to transform $C_i$ into a 3-cycle if
   $C_i$ has length at least four. Intuitively, each such CET reduces the length of $C_i$ by one. Suppose that $N_r'$ has been obtained from $N_r$ by a sequence of CETs and that cycles $C_1, C_2 \ldots, C_{i-1}$ are 3-cycles in $N_r'$. Consider the cycle $C_i$, and let $m_i$ denote its length. Further, assume that the vertices of $C_i$ are $\{u_i, v_i, s_1, s_2, \ldots, s_{m_i-2}\}$. Let $N_r^0=N_r'$ and set $j=1$. We apply the following CET to each $j \in\{1,2, \ldots, m_i-3\}$: Let $e=(s_j,t_j)$ be the cut edge incident with $s_j$. Then we obtain $N_r^j$ from $N_r^{j-1}$ by deleting $e$, suppressing $s_j$, subdividing the edge incident with $\rho$ with a new vertex $u_j'$, adding the edge $(u_j',t_j)$, and incrementing $j$ by one. By the choice of the vertices $s_j$, all moves are valid CETs and since we apply $m_i-3$ of them, no pair of parallel edges is created in the process. Moreover, when $j=m_i-2$, the size of $C_i$ is three and the process stops.  Let $N_r''$ denote the rooted level-$1$ network obtained from $N_r$ by transforming all cycles of $N_r$ into 3-cycles. It follows that each CET in the CET sequence that transforms $N_r$ into $N_r''$ cuts an edge $e=(s_j,t_j)$  in $N_r^{j-1}$ such that $t_j$ is either a leaf or a tree vertex. If $t_j$ is a tree vertex, then it has at least one descendant that is a leaf. Hence, by the chosen ordering $(C_1, C_2, \ldots, C_k)$, $N_r''$ is obtained from $N_r$ by at most $\vert X \vert$ CETs.

 Now let $(C_1',C_2',\ldots, C_k')$ be a sequence of the cycles in $N_r''$ such that $C_i'$ precedes $C_j'$ if the source $u_i'$ of $C_i'$ is  an ancestor of the source  $u_j'$ of $C_j'$ for $i< j$.
   We apply a sequence of CETs to transform $N_r''$ into a chain of 3-cycles of length $k$. If $C'_1, C_2'\ldots, C_k'$ already form a chain of 3-cycles, we apply no CET. Else assume that for some maximum $k'$ with $1 \leq k' < k$, $N_r''$ has a chain $H_{k'}$ of 3-cycles of length $k'$. Consider the minimum $j\in\{1,2,\ldots,k\}$ such that $C'_j$ is not part of $H_{k'}$. Note that $j=1$ is possible. Let $e=(t'_j,u_j')$ denote the edge directed into the source $u_j'$ of $C_j'$. By the chosen ordering, $t_j'$ is neither the root nor a reticulation of $N_r''$. We now distinguish two cases:
   \begin{enumerate}[(a)]
   \item If $k < \vert X \vert-1$, by Lemma \ref{l:counts2}, there exists at least one tree vertex in $N_r''$, $t$ say, that is not in a cycle. Let $e'=(t,c)$ denote one of its two out-going edges. We apply a sequence of three CETs. The first CET deletes $e'$, suppresses $t$, subdivides the edge $e=(t_j',u_j')$ with a new vertex $t'$, and adds the edge $(t',c)$. 
   The second CET, deletes the edge $(t',u_j')$ directed into $u_j'$, suppresses $t'$, subdivides the edge incident with $\rho$ with a new vertex $t_j''$, and adds the edge $(t_j'',u_j')$. Clearly, no parallel edges are created in this step. Finally, let $w_j'$ denote the child of $t_j''$ that is not $u_j'$. The third CET deletes the edge $(t_j'',w_j')$, suppresses $t_j''$, subdivides the cut edge incident with the sink $v_j'$ of $C_j'$ with a new vertex $t_j'''$, and adds the edge $(t_j''',w_j')$. Again, no parallel edges are created in this step. Moreover, $t_j'''$ is a tree vertex in the resulting rooted level-$1$ network that is not in a cycle. An example of this sequence is depicted in Figure \ref{fig:Lstandardshape}.
   \item If $k=\vert X \vert-1$, the procedure is similar to Case (i) except that we only perform the second and third CET since, by Lemma \ref{l:counts2}, there is no tree vertex in $N_r''$ that is {\em not} in a cycle. To be precise, the second CET move deletes the edge $(t_j',u_j')$ instead of the edge $(t,u_j')$, which implies that this CET creates a pair of parallel edges because $t_j'$ is a vertex of a cycle of length three in $N_r''$. Furthermore, applying the third CET as in Case (a) results in a rooted almost level-$1$ network with exactly one pair of parallel edges and in which  $t_j'''$ is a tree vertex that is not in a cycle. 
    \end{enumerate}
    
    Let $K$ be the subsequence of $(C'_{j+1},C_{j+2}',\ldots,C_k')$ that precisely contains each element that is not a cycle of $H_{k'}$. Since each of Cases (a) and (b) above results in a rooted almost level-$1$ network with a tree vertex that is not in a cycle, we now apply the sequence of three CETs as described in Case (a) to each cycle in $K$ in order.  It is straightforward to check that, for $k < \vert X \vert-1$, no parallel edges are created throughout the process, whereas for $k = \vert X \vert-1$ one pair of parallel edges is created by deleting $(t_j',u_j')$, but no more pairs of parallel edges arise when applying the CETs described in Case (a) to the cycles in $K$. Moreover, the first CET as described in Case (a) ensures that we can subsequently delete the edge directed into the source of a cycle in $K$ since this edge is not incident with a reticulation. 
    Let $N_r'''$ denote the rooted almost level-$1$ network obtained from $N_r''$ by the process of moving all 3-cycles as described above. Since each of Case (a) and (b) requires at most three CETs, it follows that $N_r'''$ is obtained from $N_r''$ by a sequence of at most $3k$ CETs.  Moreover, by construction, $N_r'''$ is such that the cycles $C_1, C_2 \ldots, C_k$ form a chain of cycles of length $k$ such that each cycle has length three except for  one cycle of length two if $k=\vert X \vert-1$. If $k>0$, we may assume without loss of generality that the sink $v_k$ of $C_k$ has no descendant that is a sink. Otherwise, we set $v_k$ to be the root of $N_r'''$.
    
We now complete the transformation of $N_r'''$ into a rooted level-$1$ network on $X$ of standard shape with precisely $k$ reticulations. Let $S$ be the rooted binary subtree of $N_r'''$ whose root is $v_k$, and let $X_S$ be the leaf set of $S$. If $S$ is not a caterpillar in $N_r'''$, then we apply a sequence of at most $\vert X_S \vert$ CETs that each delete a cut edge that is incident with an element in $X_S$ and that collectively transform $N_r'''$ into a rooted almost level-$1$ network on $X$ such that $v_k$ is the root of a caterpillar with leaf set $X_S$. We next distinguish again two cases. 
 First, if $k < \vert X \vert-1$, we move each leaf $x$ in $X\setminus X_S$ that is not adjacent to any 3-cycle in $N_r'''$ by deleting the edge that is directed into $x$ and subdividing the edge that is directed out of $v_k$. This transformation requires a single CET for each $x$.
 Second, if $k = \vert X \vert-1$, then $N_r'''$ contains precisely one 2-cycle. Furthermore, there is at most one leaf $x$ in $X\setminus X_S$ that is not adjacent to a 3-cycle.  If no such $x$ exists, then $\vert X_S \vert =2$ in which case we set $x$ to be one of these two leaves. Let $e=(u,v)$ be an edge of the 2-cycle in $N_r'''$. We move $x$ by deleting the edge directed into $x$ and subdividing $e$. This step requires a single CET and results in a network whose cycles all have length three.
 Let $N_r^*$ be the network obtained from $N_r'''$ as described. Then $N_r^*$ is obtained from $N_r'''$ by at most $\vert X \vert-k$ CETs. Furthermore, by construction, $N_r^*$ is a rooted level-$1$ network with precisely $k$ reticulations of standard shape. It  now follows that $N_r^*$ can be obtained from $N_r$ by a sequence of at most $\vert X \vert+3k+\vert X \vert-k=2\vert X \vert+2k$ CETs and each intermediate network is a rooted level-$1$ network with precisely $k$ reticulations if $k < \vert X \vert-1$, or a rooted almost level-$1$ network with precisely $k$ reticulations if $k = \vert X \vert-1$. This completes the proof. 
\end{proof} 

\begin{figure}[htbp]
    \centering
    \includegraphics[scale=0.225]{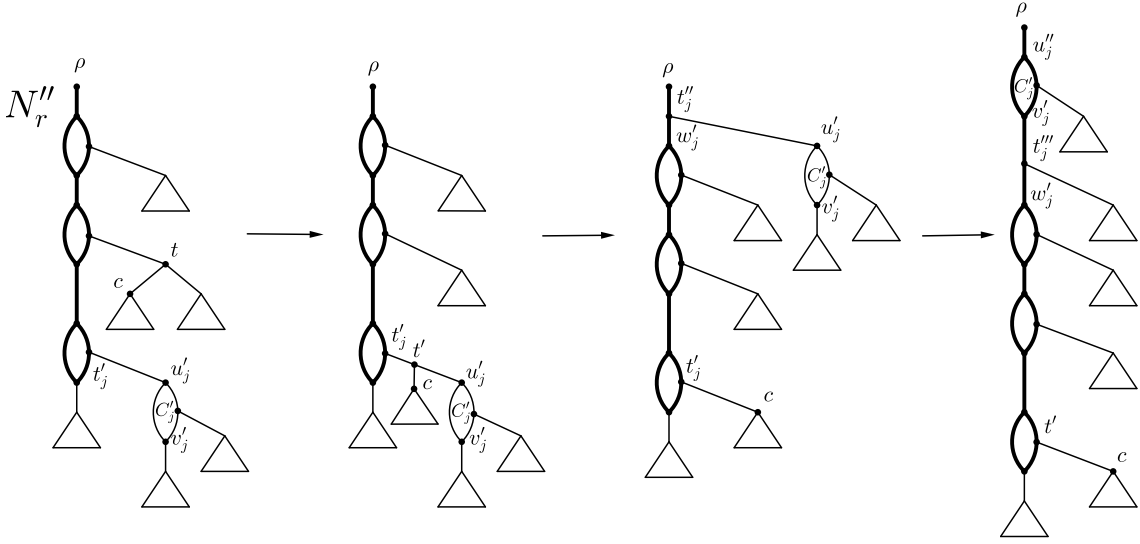}
    \caption{Sequence of three CETs as described in the proof of Lemma \ref{l:standardshape}. Triangles can be single leaves, tree-like structures, cycles, or combinations of all. Moreover, the edges connecting cycles in $N_r''$ may be paths with further branching structure, which are omitted for simplicity. The chain of 3-cycles (whose length is increased by one as a result of the sequence of CETs) is depicted in bold.}
    \label{fig:Lstandardshape}
\end{figure}

To establish the proof of the next lemma, we require some further terminology. Let $N_r$ be a rooted almost level-$1$ network on $X=\{x_1,x_2,\ldots,x_n\}$ with precisely $k$ reticulations. For each $i\in\{1,2,\ldots,k\}$, let $v_i$ be the sink of cycle $C_i$. We say that $x_i$ with $i\in\{1,2,\ldots,n\}$ is in its {\em correct position} if one of the following two conditions is satisfied.
\begin{enumerate}
    \item If $i\leq k$, then $x_i$ is adjacent to a non-sink and non-source vertex of $C_i$.
    \item If $i>k$, then $x_i$ is a leaf of a caterpillar $\sigma=(y_1,y_2,\ldots,y_{n'})$ with $n'\leq n$ that is rooted at $v_k$ such that the sequence obtained from $\sigma$ by deleting each element in $\{x_1,x_2,\ldots,x_k, x_{i+1}, x_{i+2},\ldots,x_n\}$ is equal to $(x_{k+1},x_{k+2},\ldots,x_i)$ or $(x_{k+2},x_{k+1},\ldots,x_i)$. 
\end{enumerate}

\begin{lemma}\label{l:standardform}
Let $N_r$ be a rooted level-$1$ network on $X$ with precisely $k$ reticulations such that $N_r$ is of standard shape. Then, there exists a CET sequence of length at most $3\vert X \vert$ that transforms $N_r$ into the (unique) rooted level-$1$ network on $X$ with precisely $k$ reticulations in standard form, whereby
\begin{enumerate}[(i)]
    \item if $k \leq \vert X \vert-2$, every network in the sequence is a rooted level-$1$ network on $X$ with precisely $k$ reticulations;
    \item if $k = \vert X \vert-1$, every  network in the sequence is a rooted almost level-$1$ network on $X$ with precisely $k$ reticulations. 
\end{enumerate}
\end{lemma}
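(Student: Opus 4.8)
The plan is to exploit the fact that, since $N_r$ is of standard shape, it is isomorphic to the standard-form network after relabelling leaves; equivalently, its cycles already form a chain of $k$ $3$-cycles with a caterpillar hanging off $v_k$, and only the assignment of leaf labels to the pendant positions (the third vertex of each cycle, and the positions of the caterpillar) may be wrong. Each relevant CET will be a \emph{leaf move}: prune a pendant leaf $x$ by deleting the edge into $x$ and suppressing its (tree-vertex) parent, then regraft $x$ by subdividing some edge of the remaining component. I will show that the leaves can be driven into their correct positions (in the sense defined before the lemma) by such moves, keeping the network level-$1$ when $k\le\vert X\vert-2$ and almost level-$1$ when $k=\vert X\vert-1$, within the stated budget.

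First I would place the cycle leaves. Processing $C_1,C_2,\ldots,C_k$ in this order, I maintain the invariant that $C_1,\ldots,C_{i-1}$ already carry $x_1,\ldots,x_{i-1}$ and are never touched again; since placed cycle leaves sit on their own cycles, the pendant leaf currently on $C_i$ is some $x_m$ with $m\ge i$, and $x_i$ is either in the caterpillar or on a later cycle $C_j$ with $j>i$. The core difficulty, and the main obstacle of the proof, is that for $k\le\vert X\vert-2$ no intermediate network may contain a $2$-cycle, yet pruning the leaf off a $3$-cycle immediately creates one. I resolve this with a \emph{$4$-cycle detour}: to install $x_i$ on $C_i$ I first subdivide a cycle edge of $C_i$ and regraft $x_i$ there, turning $C_i$ into a $4$-cycle carrying two pendant leaves (still level-$1$), and only then prune the old pendant leaf, returning $C_i$ to a $3$-cycle. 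When $x_i$ lies in the caterpillar this costs two CETs; when $x_i$ lies on a later cycle $C_j$ I first slide a spare caterpillar leaf onto $C_j$ (again via the $4$-cycle detour) so that $x_i$ can be pruned from $C_j$ without breaking it, costing three CETs. Lemma~\ref{l:counts2} guarantees that for $k\le\vert X\vert-2$ the caterpillar always contains at least two leaves, providing the reservoir of spare leaves these detours require, and the chosen processing order ensures the correct leaf is available and that already-placed cycles are never disturbed.

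After the cycle phase, the caterpillar contains exactly $x_{k+1},\ldots,x_n$, and it remains to sort them into the correct relative order. This is the easy step: each misplaced caterpillar leaf can be pruned and reinserted at its correct position by a single CET that touches neither the cycles nor the already-correct caterpillar leaves and creates no $2$-cycle, so the caterpillar can be sorted in at most $\vert X\vert-k$ moves. Tallying the two phases gives at most $3k+(\vert X\vert-k)=2k+\vert X\vert\le 3\vert X\vert-4$ CETs for $k\le\vert X\vert-2$, all through level-$1$ networks. For $k=\vert X\vert-1$ there is no spare caterpillar leaf, so instead of the $4$-cycle detour I permit the single transient $2$-cycle allowed by ``almost level-$1$'': each swap of two (cycle- or caterpillar-) leaves is realized by two CETs that pass through exactly one $2$-cycle, and sorting the $\vert X\vert$ leaves by at most $\vert X\vert-1$ such swaps costs at most $2(\vert X\vert-1)\le 3\vert X\vert$ CETs. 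The remaining work is the careful bookkeeping of the processing order and of the caterpillar reservoir, which I expect to be routine once the $4$-cycle detour and the level-$1$/almost-level-$1$ dichotomy are in place.
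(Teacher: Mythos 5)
Your proposal is correct and follows essentially the same strategy as the paper's proof: drive each leaf to its correct position via short CET sequences that swap misplaced leaves, using the add-before-remove ``$4$-cycle detour'' (the paper's cases (b)(i) and (c)(iii)) to keep every intermediate network level-$1$ when $k\le\vert X\vert-2$, and allowing a single transient $2$-cycle (the paper's cases (b)(ii) and (c)(i)) when $k=\vert X\vert-1$, with the same per-leaf cost of at most three CETs. The only cosmetic difference is your two-phase organization (cycles, then caterpillar) versus the paper's processing of $x_1,\ldots,x_n$ in index order, which amounts to the same thing.
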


\begin{proof}
Let $X=\{x_1,x_2,\ldots,x_n\}$. Furthermore, for some $k\geq 0$, let $C_1,C_2,\ldots, C_k$ denote the 3-cycles of $N_r$ where each cycle $C_i$ with $i\in\{1,2,\ldots,k\}$ has sink $v_i$. Since $N_r$ is in standard shape and only differs from a network in standard form by a permutation on the leaves, $v_i$ is an ancestor of each element in $\{v_{i+1},v_{i+2},\ldots,v_k\}$ for each $i\in\{1,2,\ldots,k-1\}$. Similar to the proof of Lemma~\ref{l:standardshape}, we generate a CET sequence of rooted almost level-$1$ networks on $X$ whereby each network in the sequence has precisely $k$ cycles. Although the length of a cycle $C_i$ may change throughout the sequence, its sink remains $v_i$. For each network in the sequence, we therefore refer to the cycle with sink $v_i$ as cycle $C_i$ and to the caterpillar with root $v_k$ as $T$.

Intuitively, we turn $N_r$ into the network of standard form by a sequence of CETs that sequentially swap the positions of leaves until every leaf is in its correct position (see Figure \ref{Fig_Lstandardform_example} for an example). To this end, each CET deletes a cut edge that is incident with a leaf $x_i$ and moves it to its correct position in the standard form, whereby we subdivide either an edge of $T$ or an edge of a cycle. 
The key idea is that if $k \leq \vert X \vert-2$, we can guarantee that no parallel edges are created, whereas if $\vert X \vert=k-1$, the creation of one pair of parallel edges is unavoidable. 

More formally, let $N_r'$ be a rooted level-$1$ network on $X$ with precisely $k$ reticulations of standard shape. Suppose that $N_r'$ has been obtained from $N_r$ by a sequence of CETs such that the leaves $x_1,x_2, \ldots, x_{i-1}$ are already in their correct position in $N_r'$ for some $i<\vert X \vert$, whereas $x_i$ is not in its correct position. If there is no such $x_i$, then all leaves are in their correct positions and $N_r'$ is already in standard form, in which case there is nothing to show. 
We now distinguish the following cases to move $x_i$ to its correct position via a sequence of CETs: 

\begin{enumerate}[(a)]
    \item If $x_i$ is a leaf of $T$ and $i>k$, we apply one CET to move $x_i$ to its correct position such that $(x_{k+1},x_{k+2},\ldots,x_{i})$ is a caterpillar. Note that the resulting network is a rooted level-$1$ network on $X$ with precisely $k$ reticulations of standard shape. 
   \item If $x_i$ is a leaf of $T$ and $i\leq k$, we distinguish two cases:
        \begin{enumerate}[(i)]
            \item If $k \leq \vert X \vert-2$, then $T$ consists of at least two leaves. In this case, we move $x_i$ to its correct position using a single CET, i.e., we move $x_i$ to the cycle $C_i$ whose sink is $v_i$. Note that this CET turns $C_i$ into a cycle of length four since $N_r'$ is of standard shape and all cycles of $N_r'$ have length exactly three.
            In particular, there exists a leaf $x_j$ with $j > i$ that is adjacent to a non-sink and non-source vertex of $C_i$. We now apply a second CET to move $x_j$ to the edge of $T$ that $x_i$ had been incident with. Intuitively, this sequence of two CETs swaps the positions of leaves $x_i$ and $x_j$ and the resulting network is again a rooted level-$1$ network with precisely $k$ reticulations of standard shape. 
            \item If $k = \vert X \vert-1$, then  $x_i$ is the only leaf in $T$ and its parent is $v_k$. Thus we cannot directly perform a CET that deletes $(v_k,x_i)$.  In this case, we consider the cycle $C_i$ whose sink is $v_i$. As $C_i$ has length exactly three, there exists a leaf $x_j$ with $j > i$ adjacent to the non-sink non-source vertex of $C_i$. Note that $x_j$ must exist since $x_i \neq x_n$, as otherwise $x_i=x_n$ would already be in its correct position. We now first move leaf $x_j$ to the edge $(v_k,x_i)$ of $T$. Then, we move $x_i$ to $C_i$. Intuitively, we again swap the positions of $x_i$ and $x_j$ using two CETs. However, while the network resulting from the second CET is a rooted level-$1$ network with precisely $k$ reticulations of standard shape, the  network resulting from the first CET contains one pair of parallel edges and is therefore a rooted almost level-$1$ network.          
        \end{enumerate}
    \item If $x_i$ is adjacent to a non-source and non-sink of a cycle $C$ of $N_r'$.
        \begin{enumerate}[(i)]
             \item If $k = \vert X \vert-1$ and $i \leq k$, we directly move $x_i$ to its correct position, i.e., we move $x_i$ to cycle $C_i$. Since $N_r'$ is a rooted level-$1$ network with precisely $k$ reticulations of standard shape and all of its cycles are 3-cycles, this move creates a pair of parallel edges and therefore a rooted almost level-$1$ network. However, for analogous reasons as above, there  exists a leaf $x_j$ with $j > i$ adjacent to a non-source and non-sink vertex of $C_i$, and we move $x_j$ to $C$. This sequence of two CETs swaps the roles of $x_i$ and $x_j$ and results in a rooted level-$1$ network with precisely $k$ reticulations of standard shape.

             \item If $k = \vert X \vert-1$ and $i > k$, then $i=n$. In this case, $x_i$ is already in its correct position, i.e., it is the single leaf of $T$ adjacent to $v_k$. This is due to the assumption that leaves $x_1,x_2, \ldots, x_{i-1}=x_{n-1}$ are already in their correct positions and $N_r'$ is a rooted level-$1$ network with precisely $k$ reticulations of standard shape. In this case, we perform no further CETs. 

             \item If $k \leq \vert X \vert-2$, the subtree $T$ of $N_r'$ contains at least two leaves. Let $x_j$ with $j > i$ be one of these leaves (which must exist for similar reasons as in the cases described above). Furthermore, if $i\leq k$ let $x_{j'}$ be the leaf that is adjacent to the non-source and non-sink vertex of $C_i$. Since $x_i$ is not in its correct position, we have $C_i\ne C$ and $x_{j'}\ne x_i$ We now first move $x_j$ to $C$, thereby turning $C$ into a 4-cycle. Next, we move $x_i$ to its correct position, i.e., we move it either to cycle $C_i$ if $i\leq k$, thereby turning $C_i$ into a 4-cycle and $C$ into a 3-cycle or to $T$ if $i>k$. If $i\leq k$, we perform one more CET and move $x_{j'}$  to the edge of $T$ that $x_j$ had been incident with. Again, this sequence of at most three CETs swaps the positions of leaves $x_i$ and $x_j$, and possibly $x_{j'}$, such that each network in the sequence is a rooted level-$1$ network with precisely $k$ reticulations and the final network is additionally of standard shape. 
    
        \end{enumerate}
   \end{enumerate}
In summary, if $k \leq \vert X \vert-2$, we  transform $N_r$ into a rooted level-1 network of standard form by a sequence of CETs, whereby every intermediate network is a rooted level-$1$ network with precisely $k$ reticulations. If $k = \vert X \vert-1$, a single pair of parallel edges might be created during the transformation and, so, every intermediate network is a rooted  almost level-$1$ network.
Moreover, since each of the cases requires at most three CETs,
it follows that the (unique) rooted level-1 network on $X$ with precisely $k$ reticulations in standard form can be obtained from $N_r$ by a sequence of at most $3\vert X \vert$ CETs. This completes the proof. 
\end{proof}

\begin{figure}[t]
    \centering
    \includegraphics[scale=0.25]{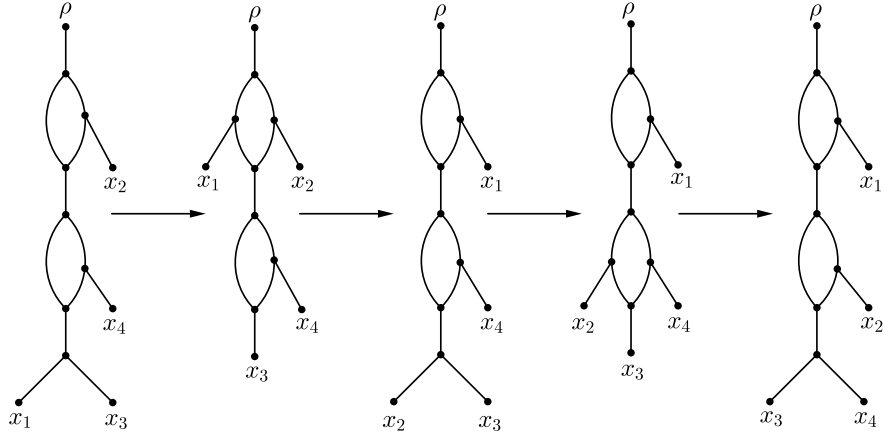}
    \caption{Sequence of CETs transforming a rooted level-$1$ network of standard shape but not standard form into a rooted level-$1$ network of standard form. 
    The first two CETs swap leaves $x_1$ and $x_2$, thereby moving $x_1$ to its correct position. The next two CETs then move $x_2$ to its correct position by swapping leaves $x_2$ and $x_4$. The resulting network is already of standard form, implying that no more CETs are required.}
    \label{Fig_Lstandardform_example}
\end{figure}

We now establish the main result of this section.

\begin{theorem} \label{cor:rootedconnectedness}
Let $k$ be a fixed non-negative integer. If $k \leq \vert X \vert-2$, then the space of rooted level-$1$ networks on $X$ with exactly $k$ reticulations is connected under CET. Otherwise, if $k=\vert X \vert-1$, then the space of rooted level-$1$ networks on $X$ with exactly $k$ reticulations is weakly connected under CET. Moreover, in both cases, the diameter of the space of rooted level-$1$ networks on $X$ with exactly $k$ reticulations is at most $O(\vert X \vert+k)$ under CET.
\end{theorem}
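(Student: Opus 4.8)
The plan is to derive the theorem as an essentially immediate consequence of Lemmas~\ref{l:standardshape} and~\ref{l:standardform}, exploiting the fact that for fixed $X$ and fixed $k$ there is a \emph{unique} rooted level-$1$ network in standard form, which I will denote $S_{X,k}$. The strategy is the classical ``route through a canonical form'' argument: to connect two arbitrary networks $N_r$ and $N_r'$ in the space, I transform each of them into $S_{X,k}$, reverse one of the two routes, and concatenate. Since $S_{X,k}$ is itself a rooted level-$1$ network on $X$ with exactly $k$ reticulations, it lies in the space under consideration and serves as a legitimate common meeting point.

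First I would fix the target. Given any $N_r$ in the space, Lemma~\ref{l:standardshape} produces a CET sequence of length at most $2\vert X\vert+2k$ taking $N_r$ to some network $N_r^*$ of standard shape, and Lemma~\ref{l:standardform} then produces a further CET sequence of length at most $3\vert X\vert$ taking $N_r^*$ to $S_{X,k}$. Concatenating, every network in the space reaches $S_{X,k}$ in at most $5\vert X\vert+2k$ CETs. Applying the same construction to $N_r'$ gives a CET sequence from $N_r'$ to the \emph{same} network $S_{X,k}$, precisely because the standard form is unique once $X$ and $k$ are fixed.

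Then I would invoke reversibility. Since the excerpt established that every single CET is reversible, the CET sequence from $N_r'$ to $S_{X,k}$ can be read backwards to give a CET sequence from $S_{X,k}$ to $N_r'$ of the same length and, crucially, passing through exactly the same set of intermediate networks. Concatenating the forward sequence $N_r \to S_{X,k}$ with the reversed sequence $S_{X,k} \to N_r'$ yields a CET sequence from $N_r$ to $N_r'$ of length at most $2(5\vert X\vert+2k)=10\vert X\vert+4k$, which is $O(\vert X\vert+k)$; this simultaneously establishes reachability and the diameter bound in both cases.

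The only point requiring care---and what I would flag as the main (if modest) obstacle---is tracking which space the intermediate networks live in, so as to separate ``connected'' from ``weakly connected''. When $k \leq \vert X\vert-2$, both lemmas guarantee that every intermediate network is a rooted level-$1$ network with exactly $k$ reticulations, and since reversal introduces no new networks, the entire concatenated sequence stays inside this space, giving genuine connectedness. When $k = \vert X\vert-1$, the lemmas only guarantee that the intermediate networks are \emph{almost} level-$1$ (a single $2$-cycle may transiently appear), so the concatenated sequence witnesses only weak connectedness, exactly as asserted. I would close by noting that $S_{X,k}$ itself is level-$1$, so no extraneous $2$-cycle is present at the junction of the two subsequences, and the uniform length bound $10\vert X\vert+4k = O(\vert X\vert+k)$ holds in both regimes.
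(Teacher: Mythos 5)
Your proposal is correct and follows essentially the same route as the paper's own proof: both transform $N_r$ and $N_r'$ into the unique standard-form network via Lemmas~\ref{l:standardshape} and~\ref{l:standardform}, invoke the reversibility of CET to concatenate the two sequences, track whether the intermediates are level-$1$ or merely almost level-$1$ to distinguish connectedness from weak connectedness, and obtain the same $10\vert X\vert+4k = O(\vert X\vert+k)$ diameter bound.
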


\begin{proof}
Let $N_r$ and $N_r'$ be two rooted level-$1$ networks on $X$ with exactly $k$ reticulations. First, if $k\leq 
\vert X \vert-2$ then, by Lemmas~\ref{l:standardshape} and~\ref{l:standardform}, $N_r$ (resp. $N_r'$) can be transformed into the rooted level-$1$ network on $X$ with precisely $k$ reticulations in standard form such that each intermediate network is level-1 and has exactly $k$ reticulations. Hence, if $k\leq 
\vert X \vert-2$, it follows from the reversibility of CET that the space of  rooted level-$1$ networks with exactly $k$ reticulations is connected.
Second, if $k=
\vert X \vert-1$ then, again by Lemmas~\ref{l:standardshape} and~\ref{l:standardform}, $N_r$ (resp. $N_r'$) can be transformed into the rooted level-$1$ network on $X$ with precisely $k$ reticulations in standard form such that each intermediate network is almost level-1 and has exactly $k$ reticulations. Hence, if $k=
\vert X \vert-1$, then the space of  rooted level-$1$ networks with exactly $k$ reticulations is weakly connected. Moreover, applying Lemmas~\ref{l:standardshape} and~\ref{l:standardform} one more time, it requires at most $2\vert X \vert+2k+3\vert X \vert=5\vert X \vert+2k$ CETs to transform each of $N_r$ and $N_r'$ into the unique rooted level-$1$ network on $X$ with exactly $k$ reticulations in standard form. Hence, if $k=\vert X \vert-1$ (resp. $k<\vert X \vert-1$), then there exists a CET sequence of length at most $10\vert X \vert+4k$ that connects $N_r$ and $N_r'$ in the space of all rooted level-$1$ networks on $X$ with exactly $k$ reticulations (resp. in the space of all rooted almost level-$1$ networks on $X$ with exactly $k$ reticulations). In both cases, the diameter is therefore $O(\vert X \vert+k)$.
\end{proof}

\noindent We remark in passing that Theorem~\ref{cor:rootedconnectedness} strengthens a previous result on the connectedness of the space of rooted level-$1$ networks on $X$ with exactly $k$ reticulations.  In particular, Klawitter showed in~\cite{klawitter2020spaces} that this space is connected if one allows for $k$ pairs of parallel edges, whereas our result requires at most one pair of parallel edges.

\section{Connectedness of semi-directed level-$1$ networks} \label{sec:connectedness}
\subsection{Connectedness for networks with a fixed number of reticulations} \label{sec:withintier}
In this section, we use the results established in Section~\ref{sec:rooted} to establish connectedness results under CET for spaces of semi-directed level-$1$ networks with a fixed number of reticulations. Similar to the previous section, we start by generalizing the notion of semi-directed level-$1$ networks. A semi-directed phylogenetic network is called {\it almost level-$1$} if it has a rooted partner that is almost level-$1$. Thus, a semi-directed almost level-$1$ network has at most two cycles of length two and no loop, or at most one loop and no cycle of length two. Furthermore, we say that the space of semi-directed level-$1$ networks with exactly $k$ reticulations is {\it weakly connected} under CET, if, for all semi-directed  level-$1$ networks with exactly $k$ reticulations, $N_s$ and $N_s'$ say, there is a CET sequence connecting $N_s$ and $N_s'$ whereby every  network in the sequence is a semi-directed almost level-$1$ network.

The main result of this section is the following theorem. 
\begin{theorem}\label{t:semidirected_connected}
Let $k$ be a fixed non-negative integer. If $k \leq \vert X \vert-2$, then, the space of semi-directed level-$1$ networks on $X$ with exactly $k$ reticulations is connected under CET. Otherwise, if $k=\vert X \vert-1$, then the space of semi-directed level-$1$ networks on $X$ with exactly $k$ reticulations is weakly connected under CET. 
Moreover, in both cases, the diameter of the space of semi-directed level-$1$ networks on $X$ with exactly $k$ reticulations is at most $O(\vert X \vert+k)$ under CET.
\end{theorem}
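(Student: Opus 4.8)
The plan is to lift the rooted connectedness result, Theorem~\ref{cor:rootedconnectedness}, to the semi-directed setting by passing through rooted partners. Given two semi-directed level-$1$ networks $N_s$ and $N_s'$ on $X$ with exactly $k$ reticulations, I would first fix rooted partners $N_r$ and $N_r'$ that are level-$1$. Since, by the construction of a semi-directed network from a rooted one, a vertex is a reticulation in $N_s$ precisely when it is a reticulation in $N_r$ (as already exploited in the proof of Lemma~\ref{l:reticulations}), both $N_r$ and $N_r'$ have exactly $k$ reticulations. Applying Theorem~\ref{cor:rootedconnectedness} then yields a rooted CET sequence $N_r=N_r^0,N_r^1,\ldots,N_r^m=N_r'$ of length $m=O(|X|+k)$ in which every intermediate network is rooted level-$1$ with $k$ reticulations when $k\leq|X|-2$, and rooted almost level-$1$ with $k$ reticulations when $k=|X|-1$.

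The heart of the argument is a \emph{bridge lemma} asserting that each rooted CET projects to at most one semi-directed CET. Writing $N_s^i$ for the semi-directed network obtained from $N_r^i$ by suppressing the root and undirecting all tree edges (so $N_s^0\cong N_s$ and $N_s^m\cong N_s'$), I would show that for each $i$ either $N_s^{i-1}\cong N_s^i$, or $N_s^i$ is obtained from $N_s^{i-1}$ by a single semi-directed CET. The rooted CET transforming $N_r^{i-1}$ into $N_r^i$ deletes a cut edge $e=(u,v)$ with $u$ not a reticulation and $e$ not incident with the root, suppresses $u$, and reattaches $v$ by subdividing an edge of the root-component. By Lemma~\ref{l:cut-edge}, the associated undirected edge is a cut edge of $N_s^{i-1}$, and $N_r^{i-1}$ itself witnesses the first condition of the semi-directed CET definition (namely that $u$ is the parent of $v$), so the move is an admissible semi-directed CET with $u$ not a reticulation. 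Crucially, after deleting $e$ the root-component of $N_r^{i-1}$ is exactly the component of $N_s^{i-1}$ that does \emph{not} contain $v$, so the rooted reattachment edge corresponds to a legitimate reattachment edge in the semi-directed picture, and the semi-directed image of the reattached network is precisely $N_s^i$.

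Concatenating these projected moves, and discarding the trivial steps where $N_s^{i-1}\cong N_s^i$, produces a semi-directed CET sequence from $N_s$ to $N_s'$ of length at most $m=O(|X|+k)$. Each intermediate semi-directed network has, by construction, a rooted partner that is level-$1$ (resp. almost level-$1$) with $k$ reticulations, hence is itself semi-directed level-$1$ (resp. almost level-$1$) with $k$ reticulations by definition. This keeps the whole sequence inside the required space, giving connectedness when $k\leq|X|-2$ and weak connectedness when $k=|X|-1$, along with the claimed diameter bound.

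The step I expect to be the main obstacle is the careful bookkeeping near the root inside the bridge lemma. The delicate points are: (i) pinning down exactly when a rooted CET leaves the semi-directed image unchanged, which happens precisely when the move only relocates the root or rearranges edges incident with the child $t$ of $\rho$ in a manner that is invisible after suppression, so that such steps may be safely deleted; (ii) handling the case of Lemma~\ref{l:cut-edge}(ii), where the cut edge of $N_s$ corresponds to the three-edge configuration $(\rho,t),(t,u),(t,v)$ rather than a single rooted edge, and checking that the reattachment still lands correctly; and (iii) confirming that a reattachment edge chosen in the rooted root-component always maps to an admissible reattachment edge in $N_s^{i-1}$, including the boundary case where the rooted reattachment edge is incident with $\rho$ or $t$ and merges under suppression, and the case where a loop or parallel pair is involved, so that the directionality convention for the new edge in the semi-directed CET definition reproduces exactly the directed structure inherited from $N_r^i$.
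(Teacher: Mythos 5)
Your overall strategy is exactly the paper's: fix level-$1$ rooted partners, run the rooted CET sequence from Theorem~\ref{cor:rootedconnectedness}, project each intermediate rooted network to its semi-directed image, and invoke a bridge lemma stating that consecutive images are either isomorphic or one CET apart (this bridge lemma is precisely the paper's Lemma~\ref{l:CETrooted_semidirected}). The generic case of your bridge argument---where neither the deleted cut edge $e=(u,v)$ nor the recipient edge $f$ is incident with the child $t$ of the root---is handled correctly via Lemma~\ref{l:cut-edge}.

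However, the root-adjacent cases are a genuine gap, not the ``careful bookkeeping'' you defer them as, because your generic argument is actually false there and a different move must be constructed. Concretely: (a) if $e=(t,v)$, then $u=t$ is suppressed in forming $N_s^{i-1}$, so there is no edge $\{u,v\}$ in $N_s^{i-1}$ to delete and no witness that ``$u$ is the parent of $v$''; the semi-directed CET must instead delete the cut edge $\{v,w\}$, where $w$ is the other child of $t$, and suppress $w$. (b) If the recipient edge is $(\rho,t)$ or $(t,w)$, that edge has no image in $N_s^{i-1}$ at all; the semi-directed recipient edge must be taken to be the edge joining the two children $w,w'$ of $t$, which---when $t$ is the source of a cycle---may be a directed edge or even a loop, forcing the special direction/parallel-edge rules in the semi-directed CET definition. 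Case (b) is not a boundary curiosity: the rooted sequence produced by Theorem~\ref{cor:rootedconnectedness} (via the proof of Lemma~\ref{l:standardshape}) repeatedly uses the edge incident with $\rho$ as the recipient edge, so this case occurs throughout the sequence. Also, your claim in (i) that moves touching edges incident with $t$ are ``invisible after suppression'' is wrong in general---deleting $(t,v)$ and reattaching $v$ deep in the network changes the semi-directed image; such steps cannot be discarded but must be realized as the alternative CETs just described. The paper's Lemma~\ref{l:CETrooted_semidirected} devotes its second and third cases to exactly this construction, and without it your proof of the bridge lemma, and hence of the theorem, is incomplete.
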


To motivate the allowance of parallel edges in establishing connectedness results for semi-directed level-$1$ networks, note that if $k = \vert X \vert-1$, the space of semi-directed level-$1$ networks on $X$ with precisely $k$ reticulations is not necessarily connected. As an example, consider the space of semi-directed level-$1$ networks with $\vert X \vert=2$ and $k=1$. Let $N_s$ be the semi-directed level-$1$ network  depicted in Figure \ref{Fig_NrNsNu}, and let $N_s'$ be the semi-directed level-$1$ network obtained from $N_s'$ by interchanging $x_1$ and $x_2$. Then, $N_s \ncong N_s'$ and there exists no CET sequence that transforms $N_s$ into $N_s'$, whereby every network in the sequence is a semi-directed level-$1$ network with one reticulation. However, it is possible to transform $N_s$ into $N_s'$ by a sequence of two CETs, whereby the network obtained from $N_s$ by the first CET is a semi-directed {\em almost} level-$1$ network with one reticulation.

Before proving Theorem~\ref{t:semidirected_connected},  we establish a connection between a sequence of CETs connecting two semi-directed almost level-$1$ networks and such a sequence connecting their rooted partners.

\begin{lemma} \label{l:CETrooted_semidirected}
Let $N_s^1$ and $N_s^2$ be two distinct semi-directed almost level-$1$ networks, and let $N^1_r$ and $N^2_r$ be two almost level-$1$ rooted partners of $N_s^1$ and $N_s^2$, respectively. If  $N^2_r$ can be obtained from $N^1_r$ by a single CET, then $N_s^2$ can be obtained from $N_s^1$ by one CET. 
\end{lemma}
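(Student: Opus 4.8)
The plan is to prove that the map $\phi$ sending each rooted partner to its semi-directed network intertwines the two flavours of CET. Concretely, the rooted CET taking $N_r^1$ to $N_r^2$ deletes a cut edge $(u,v)$, suppresses $u$, and regrafts the subnetwork $B$ rooted at $v$ by subdividing an edge $f$ in the component containing $\rho$ after deleting $(u,v)$ from $N_r^1$. I will exhibit a single semi-directed CET on $N_s^1$ that deletes the cut edge of $N_s^1$ corresponding to $(u,v)$ and regrafts $B$ at the edge of $N_s^1$ corresponding to $f$, and show that its output is precisely $\phi(N_r^2)$, which equals $N_s^2$ since $N_r^2$ is a rooted partner of $N_s^2$. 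Because $B$ (whose edges point away from $v$ in every rooted partner) is left intact and only its attachment point moves, and because $N_s^1 \ncong N_s^2$ by hypothesis, the resulting operation is a genuine single CET.

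First I would locate the pruned cut edge of $N_s^1$ via Lemma~\ref{l:cut-edge}, distinguishing whether $u$ equals the child $t$ of $\rho$. If $u \ne t$ then $\{u,v\}$ is itself a cut edge of $N_s^1$ (Condition (i)); I delete it and suppress $u$, which is legitimate since $u$ is not a reticulation in $N_r^1$ and hence not in $N_s^1$. If $u = t$, let $b$ be the other child of $t$; then $(\rho,t)$ is a cut edge (it isolates $\rho$), $(t,v)$ is a cut edge by hypothesis, and a short connectivity argument—using that $t$ is the only neighbour of $\rho$ and that the $v$-side is already cut off—shows $(t,b)$ is a cut edge too, so $\{v,b\}$ is a cut edge of $N_s^1$ (Condition (ii)). Here I delete $\{v,b\}$ and suppress $b$; the essential point is that $b$ is not a reticulation, for otherwise $(t,b)$ would be a reticulation edge, contradicting Lemma~\ref{l:ret-edges}. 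In both cases $N_r^1$ witnesses the required rooted-partner condition (Condition 1 or 2) in the definition of a semi-directed CET.

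Next I would match the regraft edge. If $f$ is incident with neither $\rho$ nor $t$, it descends to an edge of $N_s^1$ lying in the component not containing $v$, and subdividing it reproduces $\phi(N_r^2)$ directly. If $f$ is $(\rho,t)$ or an out-edge of $t$, the key observation is that $\phi$ collapses all such root-region edges to the single edge joining the two children of $t$; a direct computation of $\phi(N_r^2)$ for each position of $f$ shows that the new subdivision vertex becomes a degree-three vertex adjacent to those two children and to $v$, that is, exactly the network obtained by subdividing that single edge of $N_s^1$ and attaching $v$. The loop and parallel-edge configurations permitted for almost level-$1$ networks are precisely those covered by the loop-subdivision clause of the semi-directed CET definition, so they require no separate treatment. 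Verifying $\phi(N_r^2) = N_s^2$ then completes the argument.

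The main obstacle is the root-region bookkeeping: since $\phi$ deletes $\rho$ and suppresses $t$, the CET's own suppression and subdivision can interfere with this collapse, so the pruned and regraft edges correspond to edges of $N_s^1$ only after passing through $\phi$, and the sub-cases $u = t$, $f$ a root-region edge, and their combination must each be checked. The one step that is more than diagram-chasing is proving that the suppressed vertex is never a reticulation in the $u = t$ case, which I handle above through the incompatibility of cut edges and reticulation edges (Lemma~\ref{l:ret-edges}). Finally, the few degenerate configurations in which a rooted CET leaves the semi-directed network unchanged (for instance when $b$ is a leaf and $u = t$, forcing $f$ to be the unique surviving root-region edge) do not arise, since they would contradict the hypothesis $N_s^1 \ncong N_s^2$.
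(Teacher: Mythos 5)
Your proposal is correct and takes essentially the same approach as the paper's proof: both case-split on whether the pruned edge or the regraft edge is incident with the root's child $t$, use Lemma~\ref{l:cut-edge} to transfer cut edges to the semi-directed side, switch to deleting the edge joining $v$ and the sibling child (and suppressing that sibling, which is not a reticulation) when $u=t$, and regraft onto the merged edge between the two children of $t$ when $f$ lies in the root region. The only difference is organizational—the paper uses three mutually exclusive cases after noting that $e$ and $f$ cannot both be incident with $t$ (by $N_r^1\ncong N_r^2$), whereas you use a $2\times 2$ split with the degenerate combination ruled out at the end—but the mathematical content is the same.
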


\begin{proof}
Suppose that $N^2_r$ can be obtained from $N^1_r$ by a single CET. Let $e=(u,v)$ be the cut edge of $N^1_r$ that is deleted in obtaining $N^2_r$ from $N^1_r$. Let $M$ and $M'$ be the two connected subnetworks that result from deleting $e$ and suppressing $u$, where $M$ contains $\rho$ and $M'$ contains $v$. 
Furthermore, let $f$ be the edge of $M$ that is subdivided with a new vertex $u'$ in obtaining $N^2_r$ from $M$ and $M'$ be adding the edge $(u',v)$. Observe that $f$ is also an edge of $N_r^1$. Moreover, by definition of a CET,  $u$ is not a reticulation and $u\ne \rho$.  Now, let $t$ be the unique child of $\rho$ in $N_r^1$. Since $N_r^1\ncong N_r^2$, it follows that $e$ and $f$ cannot both be incident with $t$. To complete the proof, we consider three cases. 

First, assume that neither $e$ nor $f$ is incident with $t$. By Lemma~\ref{l:cut-edge}, $\{u,v\}$ is a cut edge of $N_s^1$. Moreover, since $N_s^1$ is obtained from $N_r^1$ by deleting $\rho$, suppressing $t$, and undirecting all tree edges, $f$ is also an edge of $N_s^1$. It now follows that $N_s^2$ can be obtained from $N_s^1$ by the CET that deletes $\{u,v\}$, suppresses $u$, subdivides $f$ with a new vertex $u'$, and joins the two vertices $u'$ and $v$ with a new edge. 

Second, assume that $e$ is incident with $t$. Then $t=u$ and all three edges that are incident with $t$ are cut edges of $N_r^1$. Let $w$ be the second child of $t$ that is not $v$. If follows from Lemma~\ref{l:cut-edge}, that $\{v,w\}$ is a cut edge of $N_s^1$. Furthermore, as $f$ is not incident with $t$, an argument analogous to that used in the first case implies that $f$ is also an edge of  $N_s^1$. Hence, $N_s^2$ can be obtained from $N_s^1$ by the CET that deletes $\{v,w\}$, suppresses $w$, subdivides $f$ with a new vertex $u'$, and joins the two vertices $u'$ and $v$ with a new edge.

Third, assume that $f$ is incident with $t$.
As before, $\{u,v\}$ is a cut edge of $N_s^1$ by Lemma~\ref{l:cut-edge}. If $t$ is not the source of a cycle, let $w$ and $w'$ be the two children of $t$ in $N_r^1$, Then $\{w,w'\}$ is an edge in $N_s^1$. Hence, $N_s^2$ can be obtained from $N_s^1$ by the CET that deletes $\{u,v\}$, suppresses $u$, subdivides $\{w,w'\}$ with a new vertex $u'$, and joins the two vertices $u'$ and $v$ with a new edge. On the other hand, if $t$ is the source of a cycle in $N_r^1$, let $(t,w)$ and $(t,w')$ be the two edges that are directed out of $t$.  It follows that $\{w,w'\}$, $(w,w')$, or $(w',w)$ is an edge of $N_s^1$ depending on whether or not one of $w$ and $w'$ is a reticulation in $N_s^1$.  Since $N_r^1$ is almost level-$1$, we may have $w=w'$, in which case $(w,w')$ is a loop. Thus, $N_s^2$ can be obtained from $N_s^1$ by the CET that deletes $\{u,v\}$, suppresses $u$, subdivides $\{w,w'\}$ with a new vertex $u'$, and joins the two vertices $u'$ and $v$ with a new edge. Additionally, if $\{w,w'\}$ is a loop in $N_s^1$, then one of the two resulting parallel edges that each join $u'$ and $w$ is initially undirected and therefore directed into $w$ in $N_s^2$.

It now follows that, for all three cases, $N_s^2$ can be obtained from $N_s^1$ by one CET; thereby establishing the lemma.
\end{proof}

We are now in a position to prove Theorem~\ref{t:semidirected_connected}.
\begin{proof}[Proof of Theorem \ref{t:semidirected_connected}]
Let $N_s$ and $N_s'$ be two semi-directed level-$1$ networks on $X$ that each have exactly $k$ reticulations. Furthermore, let $N_r$ and $N_r'$ be a level-$1$ rooted partner of $N_s$ and $N_s'$, respectively. By Theorem~\ref{cor:rootedconnectedness} and its proof, there exists a CET sequence $$N_r \cong N_r^1,N_r^2,\cdots,N_r^{m-1},N_r^m \cong N_r'$$ with $m\leq 10\vert X \vert+4k$  that connects $N_r$ and $N_r'$ such that each network in the sequence is either a rooted almost level-$1$ network on $X$ and with exactly $k$ reticulations if $k=\vert X \vert-1$ or a rooted level-$1$ network on $X$ with exactly $k$ reticulations if $k< \vert X \vert-1$. For each $i\in\{2,3,\ldots,m-1\}$, let $N_s^i$ be the semi-directed network on $X$ that is obtained from $N_r^i$ by deleting $\rho$, suppressing the resulting vertex of in-degree zero and out-degree two, and undirecting all tree edges. By construction, $N_s^i$ has exactly $k$ reticulations and $N_r^i$ is a rooted partner of $N_s^i$.

Set $N_s^1=N_s$ and $N_s^m=N_s'$. Then, for each $i\in\{1,2,\ldots,m\}$, $N_s^i$ is level-$1$ (resp. almost level-$1$) if and only if $N_r^i$ is level-$1$ (resp. almost level-$1$). Now consider $N_s^i$ and $N_s^{i+1}$ for each $i\in\{1,2,\ldots,m-1\}$. We may have $N_s^i\cong N_s^{i+1}$. It follows from Lemma~\ref{l:CETrooted_semidirected} that $N_s^{i+1}$ can be obtained from $N_s^i$ by at most one CET. Hence, there exists a sequence of at most $m$ CETs that connects $N_s$ and $N_s'$ such that each network in the sequence is either a semi-directed almost level-$1$ network with exactly $k$ reticulations if $k= \vert X \vert-1$ or a semi-directed level-$1$ network on $X$ with exactly $k$ reticulations if $k\leq \vert X \vert-2$. The theorem now follows. 
\end{proof}

\subsection{Connectedness for networks with a varying number of reticulations} \label{sec:acrosstiers}
In this section, we show that the space of semi-directed level-$1$ networks on a fixed leaf set is connected under CET. 
We start with an observation that we freely use throughout this section. For a semi-directed phylogenetic network $N_s$ with no reticulation, the definition of a CET on $N_s$ coincides with that of a subtree prune and regraft  (SPR) operation for unrooted phylogenetic trees. To be precise,  an {\it unrooted binary phylogenetic $X$-tree} $T$ is an undirected tree whose leaves are bijectively labeled with $X$ and whose internal vertices all have degree three. Under the subtree prune and regraft operation, it is well-known that the space of all unrooted phylogenetic trees on a fixed leaf set is connected~\cite{allen2001subtree,maddison1991discovery}.  

In addition to the CET as defined in Section~\ref{sec:CET}, we next introduce two operations that change the number of reticulations in semi-directed phylogenetic network $N_s$ by one.\\

\noindent {\bf CET$^-$.} Let $e=(u,v)$ be a reticulation edge of $N_s$ such that, if $u\ne v$, then $u$ is not a reticulation. If $(u,v)$ is a loop, obtain a network $N_s'$ from $N_s$
by deleting $u$. Otherwise, obtain $N_s'$ from $N_s$ 
by undirecting the edge that is directed into $v$ and not $e$, deleting $e$, and suppressing $u$ and $v$.\\

\noindent If $e$ is a loop, then $N_s$ has a unique rooted partner and it follows that  the neighbor of $u$ in $N_s$ is not a reticulation. Hence, regardless of whether $e$ is a loop in $N_s$ or not, $N_s'$ is a semi-directed phylogenetic network on $X$.\\

\noindent {\bf CET$^+$.} Let $e$ be an edge of $N_s$. Obtain a network $N$ from $N_s$ in one of the following two ways: (i) Subdivide $e$ with a new vertex $v$, add the edge $\{u,v\}$, where $u$ is a new vertex, and add the (directed) loop $(u,u)$; or (ii) subdivide $e$ with a new vertex $v$, subdivide an edge in the resulting network  with a new vertex $u$, add the new edge $(u,v)$, and direct one of the two other edges incident with $v$ into $v$.\\

\noindent In contrast to CET$^-$, observe that a CET$^+$ does not necessarily result in a semi-directed phylogenetic network. For example, if $N_s$ contains a loop and $N$ is obtained from $N_s$ by a CET$^+$ as described in (i), then $N$ contains two loops and is not a semi-directed phylogenetic network. 

Now let $N_s$ and $N_s'$ be two semi-directed phylogenetic networks on $X$. If $N_s'$ can be obtained from $N_s$ by a single CET$^+$ (resp. CET$^-$), then $N_s$ can by obtained from $N_s'$ by a single CET$^-$ (resp. CET$^+$). Furthermore, we say that $N_s'$ can be obtained from $N_s$ by a single {\it extended CET} if it can be obtained by applying exactly one of CET, CET$^-$, and CET$^+$ to $N_s$.  Similar to the CET distance, we refer to the minimum number of extended CETs  that are required to transform $N_s$ into $N_s'$ as the {\it extended CET distance} between $N_s$ and $N_s'$,

\begin{theorem}\label{t:across-tiers}
The space of all semi-directed level-$1$ networks on $X$ is connected under extended CET.
\end{theorem}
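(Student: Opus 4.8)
The plan is to reduce the connectedness statement for all semi-directed level-$1$ networks to the within-tier connectedness already established in Theorem~\ref{t:semidirected_connected}, using CET$^+$ and CET$^-$ only to move between adjacent tiers. Concretely, let $N_s$ and $N_s'$ be two semi-directed level-$1$ networks on $X$ with $r(N_s)=k$ and $r(N_s')=k'$ reticulations, and suppose without loss of generality $k\le k'$. The idea is a ``go down to a tree, then come back up'' argument: I would first exhibit a CET$^-$ that reduces $r(N_s)$ by one while keeping the resulting network level-$1$, iterate this down to a semi-directed level-$1$ network with zero reticulations (i.e.\ an unrooted binary phylogenetic $X$-tree), and do the same for $N_s'$. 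Since the tier-$0$ networks are exactly unrooted phylogenetic $X$-trees, which are connected under SPR (and hence under CET, by the observation opening Section~\ref{sec:acrosstiers}), the two resulting trees are connected by a CET sequence within tier $0$. Running the $N_s'$-side descent in reverse (replacing each CET$^-$ by its inverse CET$^+$) then completes a sequence of extended CETs from $N_s$ to $N_s'$.

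The first key step is to show that \emph{every} semi-directed level-$1$ network with $k\ge 1$ reticulations admits a CET$^-$ whose output is again level-$1$. Here I would invoke Lemma~\ref{l:ret-edges}: each reticulation lies on a cycle $C$, and since $N_s$ is level-$1$ its cycles are vertex-disjoint. I would pick a cycle $C$ and apply the CET$^-$ to a reticulation edge $e=(u,v)$ of $C$ chosen so that $u$ is not itself a reticulation (such a choice exists: at a reticulation $v$, its two incoming reticulation edges have tails on $C$, and because $C$ has a unique source and the cycles are disjoint, at least one such tail is a non-reticulation tree vertex). Deleting $e$ and suppressing as prescribed destroys exactly the cycle $C$, lowers $r$ by one, and—because the remaining cycles were disjoint from $C$—leaves all of them intact; hence the result is level-$1$. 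It is worth checking separately the loop case from the definition of CET$^-$, where $N_s$ has a unique rooted partner and deleting $u$ removes the loop cleanly; this matches the two-case structure of the CET$^-$ definition given above.

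The second key step is the tier-$0$ base case, which is immediate: two networks with no reticulations are unrooted binary phylogenetic $X$-trees, and the cited SPR connectivity result \cite{allen2001subtree,maddison1991discovery} together with the stated coincidence of CET and SPR on reticulation-free networks gives a CET sequence between them staying inside tier $0$. Assembling the three pieces—descent of $N_s$ to a tree via $k$ applications of CET$^-$, descent of $N_s'$ to a tree via $k'$ applications of CET$^-$, and an intermediate CET path between the two trees—yields an extended-CET sequence connecting $N_s$ and $N_s'$ in which every intermediate network is a semi-directed level-$1$ network, since each CET$^-$ and each inverse CET$^+$ preserves level-$1$-ness and the tier-$0$ CETs act on trees.

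The main obstacle I anticipate is verifying that the chosen CET$^-$ genuinely stays within the level-$1$ class rather than merely within almost level-$1$, and in particular handling the small or degenerate cycles (length-$2$ cycles, parallel edges, and the loop) correctly. The definition of CET$^-$ bifurcates precisely on whether $e$ is a loop, and one must confirm that in the non-loop case the suppression of both $u$ and $v$ after undirecting the sibling reticulation edge does not accidentally merge two distinct cycles or create a new short cycle. Because the cycles of a level-$1$ network are vertex-disjoint and we remove an entire cycle's reticulation, no other cycle is touched; but I would state this disjointness argument carefully, as it is the hinge on which the whole ``descend one tier at a time'' scheme rests. Once that is pinned down, reversibility of CET$^+$/CET$^-$ (noted in the text) makes the upward half of the path free.
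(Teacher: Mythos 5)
Your proposal is correct and follows essentially the same route as the paper's proof: repeatedly apply CET$^-$ to strip reticulations from both networks while staying level-$1$ (justified, as you note, by the vertex-disjointness of cycles forcing the tail of each chosen reticulation edge to be a non-reticulation), connect the two resulting unrooted trees via SPR/CET in tier~$0$, and invert one descent using the reversibility of CET$^+$/CET$^-$. Your write-up is in fact somewhat more explicit than the paper's about why each CET$^-$ preserves level-$1$-ness, but the argument is the same.
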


\begin{proof}
Let $N_s$ and $N_s'$ be two semi-directed level-$1$ networks on $X$, and let $k=r(N_s)$ and  $k'=r(N_s')$. Furthermore, let $(v_1,v_2,\ldots,v_k)$ be an ordering on the reticulations of $N_s$ and, similarly, let $(v_1',v_2',\ldots,v_k')$ be an ordering on the reticulations of $N_s'$. 

Now, setting $N_s^0=N_s$, repeat the following operation $k$ times for each $i\in\{1,2,\ldots,k\}$ in order. Obtain a network $N_s^i$ from $N_s^{i-1}$ by applying a CET$^-$ to a reticulation edge $(u_i,v_i)$ that is incident with $v_i$. Since $N_s^0$ is level-$1$, it follows that $u_1$ is not a reticulation. Thus $N_s^1$ is a semi-directed level-$1$ network on $X$. Repeating this argument, it follows that
each $N_s^i$ with $i\in\{0,1,2,\ldots,k\}$ is a semi-directed level-$1$ network on $X$ and $N_s^k$ is an unrooted phylogenetic tree on $X$.  Let $T_s=N_s^k$, and let $T_s'$ be an unrooted phylogenetic tree on $X$ obtained from $N_s'$ by applying $k'$  CET$^-$ in an analogous way. Since $T_s'$ can be obtained from $T_s$ by a sequence of subtree prune and regraft operations, it follows that  that $T_s'$ can be obtained from $T_s$ by a sequence of CETs and each tree in the sequence is an unrooted phylogenetic tree on $X$. The theorem now follows from the reversibility of CET, CET$^+$, and CET$^-$.
\end{proof}

The next theorem  is similar to Theorem~\ref{t:across-tiers}  and establishes connectedness for the larger space of semi-directed phylogenetic networks on a fixed leaf set.

\begin{theorem}\label{t:across2}
The space of all semi-directed phylogenetic networks on $X$ is connected under extended CET.
\end{theorem}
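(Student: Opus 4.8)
The plan is to mimic the strategy of Theorem~\ref{t:across-tiers} by reducing the general semi-directed phylogenetic network case to the unrooted-tree base case, using the reversibility of the extended CET moves. Given two arbitrary semi-directed phylogenetic networks $N_s$ and $N_s'$ on $X$ with $k = r(N_s)$ and $k' = r(N_s')$ reticulations, I would repeatedly apply CET$^-$ moves to strip away reticulations one at a time until each network is reduced to an unrooted binary phylogenetic $X$-tree. Since the space of unrooted phylogenetic trees on a fixed leaf set is connected under subtree prune and regraft (and hence under CET, by the observation at the start of Section~\ref{sec:acrosstiers}), the two resulting trees are connected by a sequence of CETs. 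The theorem then follows by reversibility: the CET$^-$ sequence reducing $N_s'$ to its tree is reversed into a CET$^+$ sequence building $N_s'$ back up.

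\textbf{Key steps.} First I would fix orderings $(v_1, v_2, \ldots, v_k)$ and $(v_1', v_2', \ldots, v_{k'}')$ on the reticulations of $N_s$ and $N_s'$. Setting $N_s^0 = N_s$, I would, for each $i$ in turn, obtain $N_s^i$ from $N_s^{i-1}$ by applying a CET$^-$ to some reticulation edge incident with $v_i$. The crucial point to verify is that such a CET$^-$ is always \emph{applicable}: the definition of CET$^-$ requires that either $e=(u,v)$ is a loop, or that $u$ is not a reticulation. The main difference from Theorem~\ref{t:across-tiers} is that here the intermediate networks need not be level-$1$, so I cannot invoke the level-$1$ structure to guarantee that a suitable incident reticulation edge exists. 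After $k$ steps $N_s^k =: T_s$ is an unrooted phylogenetic tree on $X$; similarly $N_s'$ reduces to a tree $T_s'$. The trees $T_s$ and $T_s'$ are connected by a CET sequence, and reversing the CET$^-$ sequence from $N_s'$ yields a CET$^+$ sequence, concatenating to connect $N_s$ and $N_s'$.

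\textbf{Main obstacle.} The hard part is showing that at every stage there is a reticulation edge $(u,v)$ incident with the chosen reticulation $v_i$ for which CET$^-$ is valid, i.e.\ a reticulation edge whose tail $u$ is not itself a reticulation (or which is a loop). In a general (non-level-$1$) semi-directed network both reticulation edges entering $v_i$ could in principle have reticulation tails, so one must argue that one can always \emph{choose} a reticulation whose removal is legitimate. The clean way to handle this is to pick, at each step, a reticulation that is \enquote{topologically lowest} in a suitable sense: in any rooted partner, take a reticulation $v_i$ all of whose reticulation-edge endpoints on the incoming side are non-reticulations, which exists because a finite acyclic structure always has such a minimal reticulation; deleting it strictly decreases $r$ and keeps the object a semi-directed phylogenetic network, since CET$^-$ is guaranteed to produce one. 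Once this applicability is established, the remainder is a direct transcription of the proof of Theorem~\ref{t:across-tiers}, appealing to the connectivity of unrooted tree space under SPR and to the reversibility of CET, CET$^+$, and CET$^-$.
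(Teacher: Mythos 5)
Your overall strategy is the same as the paper's: strip the reticulations of $N_s$ and $N_s'$ one at a time with CET$^-$ moves, connect the two resulting unrooted phylogenetic trees by SPR (equivalently, CET) moves, and reverse one of the CET$^-$ sequences into a CET$^+$ sequence. You also correctly isolate the one point where the general case differs from the level-$1$ case of Theorem~\ref{t:across-tiers}: one must show that at each step some reticulation admits a valid CET$^-$, i.e.\ has an incoming reticulation edge whose tail is not itself a reticulation.

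However, your resolution of that point fails in its crucial direction. You propose to take a \enquote{topologically lowest} reticulation and justify its usability by the existence of a \emph{minimal} reticulation in a finite acyclic structure. Minimality is the wrong extreme: a minimal reticulation can have reticulation parents. Concretely, take a rooted partner containing reticulations $r_1$ and $r_2$ whose children coincide in a third reticulation $v$ (such a rooted binary network on three leaves exists and is permitted here, since Theorem~\ref{t:across2} is not restricted to level-$1$ networks); then $v$ has no reticulation descendants, so it is minimal, yet both of its parents are reticulations and no CET$^-$ can be applied to either edge entering $v$. The reticulations that do satisfy your stated property (\enquote{all incoming tails are non-reticulations}) are the \emph{maximal} ones: if a topmost reticulation had a reticulation parent, it would be a descendant of that reticulation, contradicting maximality. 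This is exactly what the paper's proof does: it fixes a rooted partner $N_r$ and orders the reticulations $(v_1,v_2,\ldots,v_k)$ so that, for $i<j$, $v_i$ is not a descendant of $v_j$ in $N_r$, and removes them in that top-down order; since suppressing vertices creates no new ancestor--descendant pairs, every $v_i$ is topmost among the reticulations remaining at step $i$, so each CET$^-$ is applied to an edge $(u,v_i)$ with $u$ not a reticulation. With \enquote{minimal/lowest} replaced by this \enquote{not a descendant of any remaining reticulation} choice, your argument goes through and coincides with the paper's proof.
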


\begin{proof}
Let $N_s$ and $N_s'$ be two semi-directed phylogenetic networks on $X$
with $k=r(N_s)$ and  $k'=r(N_s')$. Let $N_r$ and $N_r'$ be a  rooted partner of $N_s$ and $N_s'$, respectively.  Furthermore, let $(v_1,v_2,\ldots,v_k)$ be an ordering on the reticulations of $N_s$ such that, for all $i,j\in \{1,2,\ldots,k\}$ with $i<j$, $v_i$ is not a descendant of $v_j$ in $N_r$. Similarly, let $(v_1',v_2',\ldots,v_{k'}')$ be an ordering on the reticulations of $N_s'$ such that, for all distinct $i,j\in \{1,2,\ldots,k'\}$ with $i<j$, $v_i'$ is not a descendant of $v_j'$ in $N_r'$. 
The  theorem can now be established analogously to Theorem~\ref{t:across-tiers}.
The more constrained ordering of the reticulations of $N_s$ and $N_s'$ in comparison to that used in the proof of Theorem~\ref{t:across-tiers} guarantees that each CET$^-$  is applied to a reticulation edge $(u,v)$ of a semi-directed phylogenetic network on $X$ such that $u$ is not a reticulation.
\end{proof}

The next corollary follows immediately from Theorems~\ref{t:across-tiers} and~\ref{t:across2}, and the fact that each extended CET is reversible.

\begin{corollary}
The extended CET distance is a metric on the space of all semi-directed phylogenetic networks as well as on all semi-directed level-$1$ networks on $X$.
\end{corollary}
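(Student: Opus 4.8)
The plan is to verify the four metric axioms for the extended CET distance, understood on isomorphism classes of networks (so that $N_s$ and $N_s'$ are identified exactly when $N_s\cong N_s'$). Write $d(N_s,N_s')$ for the extended CET distance in the ambient space, which is either the space of all semi-directed level-$1$ networks on $X$ or the space of all semi-directed phylogenetic networks on $X$. First I would establish that $d$ is well defined and non-negative. By Theorem~\ref{t:across-tiers} (for the level-$1$ space) and Theorem~\ref{t:across2} (for the full space), any two networks in the relevant space are joined by some sequence of extended CETs whose every intermediate network lies in the same space. Hence the set of lengths of connecting sequences is a non-empty set of non-negative integers, so its minimum exists; this gives both finiteness of $d$ and the bound $d(N_s,N_s')\ge 0$.

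Next I would treat the identity of indiscernibles. A connecting sequence of length $0$ exists precisely when $N_s\cong N_s'$, namely the empty sequence. Therefore $N_s\cong N_s'$ forces $d(N_s,N_s')=0$, and conversely $d(N_s,N_s')=0$ means the shortest connecting sequence is empty, whence $N_s\cong N_s'$. Symmetry then follows from reversibility: as noted in the excerpt, each of CET, CET$^+$, and CET$^-$ is reversible (a CET$^+$ is undone by a CET$^-$ and vice versa, while a CET is self-reversing), so reversing an optimal sequence from $N_s$ to $N_s'$ yields a connecting sequence of the same length from $N_s'$ to $N_s$ that stays in the ambient space, giving $d(N_s,N_s')=d(N_s',N_s)$.

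Finally, the triangle inequality follows by concatenation. Given three networks $N_s$, $N_s'$, $N_s''$, I would concatenate an optimal connecting sequence from $N_s$ to $N_s'$ with an optimal one from $N_s'$ to $N_s''$; since both halves keep every intermediate network in the ambient space, the concatenation is a valid (though possibly non-optimal) connecting sequence from $N_s$ to $N_s''$ of length $d(N_s,N_s')+d(N_s',N_s'')$, so $d(N_s,N_s'')\le d(N_s,N_s')+d(N_s',N_s'')$.

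The argument is entirely routine, as is typical for a graph distance induced by a reversible move. The only points that genuinely invoke the earlier machinery are well-definedness, which is exactly the content of the connectedness Theorems~\ref{t:across-tiers} and~\ref{t:across2}, and symmetry, which hinges on the reversibility of the three moves; I therefore do not expect a real obstacle, beyond being careful that the metric is stated on isomorphism classes rather than on labelled representatives.
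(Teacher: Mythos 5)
Your proof is correct and follows essentially the same route as the paper, which simply observes that the corollary is immediate from Theorems~\ref{t:across-tiers} and~\ref{t:across2} (giving finiteness/well-definedness of the distance) together with the reversibility of CET, CET$^+$, and CET$^-$ (giving symmetry), the remaining axioms being routine. Your write-up just makes explicit the standard verification of the metric axioms that the paper leaves implicit.
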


\subsection{Connectedness using CET$_1$ moves} \label{Sec:CET1}
In the following, we consider CETs that operate \enquote{locally} in the sense that when a cut edge $\{u,v\}$ of a semi-directed phylogenetic network is deleted, the connected component containing $v$ is re-attached via the introduction of a new cut edge in close proximity to its original position (see formal definitions below). We then show that every CET that satisfies a mild constraint can be translated into a sequence of these local CETs.

Using similar terminology as \cite{Gambette2017}, we now define CET$_1$ moves. Let $N_s$ be a semi-directed phylogenetic network on $X$. First, when a CET deletes a cut edge $\{u,v\}$, we refer to the two edges incident with $u$ in $N_s$ that are different from the edge $\{u,v\}$ as the \emph{donor edges}, and to the edge that is subdivided by $u'$ in $N_s$ prior to adjoining $u'$ and $v$ with a new edge as the \emph{recipient edge}.
Then, a \emph{CET$_1$} is a CET applied to $N_s$ such that the recipient edge is incident with one of the two donor edges. As an example, the CET depicted in Figure \ref{fig:CETsemidirected} is a CET$_1$ since the recipient edge, i.e., the edge incident with leaf $x_5$, is also incident with one of the two donor edges incident with $u$. If we had instead subdivided the edge incident with leaf $x_3$ by $u'$ and then added the edge $\{u',v\}$, the resulting CET would not have been a CET$_1$.

Before stating the main proposition of this section, we require two more technical concepts. Let $N_s$ be a semi-directed almost level-$1$ network that contains at least one pair of parallel edges and at least one 3-cycle. We say that a CET applied to $N_s$ {\it changes the location of a pair of parallel edges} if it deletes a cut edge $e$ whose two donor edges are edges of a 3-cycle (turning this 3-cycle into a 2-cycle) and whose recipient edge is an edge of a 2-cycle (turning this 2-cycle into a 3-cycle). Similarly, if $N_s$ contains (i) precisely one loop and at least one 3-cycle, or (ii) precisely two pairs of parallel edges, we say that a CET applied to $N_s$ {\it exchanges a loop for two pairs of parallel edges or vice versa} if it (i) deletes a cut edge $e$ whose two donor edges are edges of a 3-cycle (turning this 3-cycle into a 2-cycle) and whose recipient edge is the loop of $N_s$ (turning the loop into a second 2-cycle), or (ii) deletes a cut edge $e$ whose two donor edges form a 2-cycle (turning this 2-cycle into a loop) and whose recipient edge is an edge of a 2-cycle (turning this 2-cycle into a 3-cycle).

We now show that if $N_s$ and $N_s'$ are two semi-directed (almost) level-$1$ networks on $X$ with exactly $k$ reticulations that are one CET apart such that the CET does not change the location of a pair of parallel edges, and does not exchange a loop for two pairs of parallel edges or vice versa, then there is also a sequence of CET$_1$ moves connecting $N_s$ and $N_s'$, whereby every network in the sequence is a semi-directed (almost) level-$1$ network with exactly $k$ reticulations. Note that the restriction of not changing the location of a pair of parallel edges or exchanging a loop for a pair of parallel edges is required to ensure that every network in the sequence is indeed an (almost) level-$1$ network.

\begin{proposition}\label{p:CET1moves}
Let $N_s$ and $N_s'$ be two semi-directed level-$1$ networks on $X$ and with exactly $k$ reticulations if $k < \vert X \vert-1$, respectively two semi-directed almost level-$1$ networks with exactly $k$ reticulations if $k=\vert X \vert-1$, such that $N_s'$ can be obtained from $N_s$ by a single CET that neither changes the location of a pair of parallel edges nor exchanges a loop for two pairs of parallel edges or vice versa. Then, there exists a CET$_1$ sequence transforming $N_s$ into $N_s'$ such that each network in the sequence is level-$1$ and has exactly $k$ reticulations if $k<\vert X \vert-1$ or each network in the sequence is almost level-$1$ and has exactly $k$ reticulations if $k=\vert X \vert-1$.
\end{proposition}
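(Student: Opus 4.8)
The plan is to realise the given CET as a sequence of CET$_1$ moves that slide the pruned subnetwork one edge at a time along a path in the host, in direct analogy with the classical decomposition of an SPR move into a sequence of NNI moves. Write the CET as the deletion of a cut edge $\{u,v\}$ with donor edges $d_1=\{u,a\}$ and $d_2=\{u,b\}$ and recipient edge $f$, let $B$ be the connected component of $N_s-\{u,v\}$ containing $v$, and let $H$ be the remaining component, in which the suppression of $u$ merges $d_1,d_2$ into a single edge $e_0=\{a,b\}$. Both $e_0$ and $f$ are edges of the connected graph $H$, so I would fix a shortest \emph{edge-path} $e_0,e_1,\ldots,e_\ell=f$ in the underlying graph of $H$ in which consecutive edges share an endpoint.

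The walk itself is then immediate: if $B$ is currently attached at a vertex $w_i$ subdividing $e_i$, the two donor edges are the two halves of $e_i$ at $w_i$, and since $e_{i+1}$ shares an endpoint with $e_i$, the recipient edge $e_{i+1}$ is incident with one of these donor edges, so moving $B$ from $e_i$ to $e_{i+1}$ is by definition a CET$_1$. (The first step, pruning $B$ off $u$ with donors $d_1,d_2$ and regrafting onto $e_1$, already has $e_1$ incident with $a$ or $b$, hence with a donor edge.) Performing these $\ell$ moves in order transports $B$ from $u$ to $f$ and produces exactly $N_s'$, since $H$ is never altered and $B$ is only pruned and regrafted. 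Reticulations are preserved throughout because each CET$_1$ only slides $B$ and never cuts a reticulation edge, cut edges never being reticulation edges by Lemma~\ref{l:ret-edges}; hence $r=k$ at every step.

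It remains to show that every intermediate network lies in the required structural class, and this is where the hypotheses enter. The only CET$_1$ operations that can create or destroy a loop or a pair of parallel edges are the \enquote{boundary} ones: subdividing a loop turns it into a $2$-cycle, pruning $B$ off a $3$-cycle collapses it to a $2$-cycle, and pruning $B$ off a pair of parallel edges collapses it to a loop. In the pure level-$1$ case $k<\vert X \vert-1$ neither loops nor parallel edges occur in $N_s$ or $N_s'$. Here I would argue that a genuine (non-CET$_1$) CET between two level-$1$ networks cannot prune $B$ off a $3$-cycle, since regrafting away from that cycle would leave a $2$-cycle in $N_s'$; moreover every fresh attachment vertex $w_i$ created during the walk subdivides an ordinary tree edge or a cycle of length at least three (turning a $3$-cycle momentarily into a $4$-cycle, never into a $2$-cycle). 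Consequently no intermediate network acquires a loop or parallel edges, and each is level-$1$ with $k$ reticulations.

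The hard part is the almost-level-$1$ regime $k=\vert X \vert-1$, where $N_s$ and $N_s'$ already carry up to two pairs of parallel edges or a loop, and the walk may be forced to pass a $2$-cycle (momentarily a $3$-cycle) or to route around a loop (which sits at a degree-three pendant vertex and can be avoided). The crucial point is that the two excluded operations are exactly the CETs whose \emph{every} CET$_1$ realisation is forced through a configuration that is not almost level-$1$ — a loop coexisting with a pair of parallel edges, or a third pair of parallel edges — whereas for every remaining CET one can route the edge-path so that at no step do a loop and a pair of parallel edges coexist, nor do three pairs of parallel edges appear. I would establish this by a case analysis on the types of the donor pair and the recipient edge (tree edge, ordinary cycle edge, $2$-cycle edge, or loop), exhibiting a safe routing in each admissible combination and checking that the inadmissible combinations are precisely the two named ones. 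This case analysis, together with verifying that safe routing keeps the defect count within the almost-level-$1$ bound at every step, is the main obstacle and the bulk of the work.
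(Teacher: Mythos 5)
Your first half---realising the CET as a walk of CET$_1$ moves that slides the pruned component $B$ one edge at a time along a path toward the recipient edge---is exactly the paper's construction, and the bookkeeping there (each slide is by definition a CET$_1$, reticulations are preserved since cut edges are never reticulation edges) is fine. The structural verification, however, contains a genuine error and a genuine gap. The error: for $k<\vert X\vert-1$ you assert that \enquote{neither loops nor parallel edges occur in $N_s$ or $N_s'$} and deduce that the CET cannot prune $B$ off a $3$-cycle. By the paper's definition, a semi-directed level-$1$ network \emph{may} contain one pair of parallel edges: it is level-$1$ exactly when it has a rooted level-$1$ partner, and a $3$-cycle whose source is the child of the root collapses to a $2$-cycle when the root is suppressed (see Figure~\ref{Fig_NrNsNu}). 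So parallel edges do occur when $k<\vert X\vert-1$, and a CET between two level-$1$ networks can perfectly well prune $B$ off a $3$-cycle, creating a $2$-cycle that persists in every intermediate network and in $N_s'$, all of which can still be level-$1$ by rooting inside that $2$-cycle. Your case analysis therefore excludes configurations that are allowed, and it never uses the hypothesis that the CET does not change the location of a pair of parallel edges---yet that hypothesis is needed precisely in this regime: if $N_s$ already has a parallel pair and the donor edges lie on a $3$-cycle, the first CET$_1$ creates a \emph{second} parallel pair and the intermediate network cannot be level-$1$; excluding that CET is what the hypothesis is for.

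The gap: for $k=\vert X\vert-1$ you explicitly defer \enquote{the bulk of the work} to an unexecuted case analysis, and the strategy you sketch (choosing a \enquote{safe routing} of the path) is not the right shape. The decisive observation, which makes routing irrelevant, is that only the \emph{first} CET$_1$ can create a new loop or pair of parallel edges: for $i\geq 1$, the $i$-th slide deletes the cut edge $\{u^i,v\}$ that was introduced in the previous step, so suppressing $u^i$ merely restores an edge of $N_s^{i-1}$; any loop or parallel pair thereby \enquote{created} already existed in $N_s^{i-1}$, hence ultimately in $N_s^1$, and so cannot be excessive; and the regraft part (subdividing an edge and adding a fresh cut edge joining two components) can never create a loop or a parallel pair. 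Thus \emph{any} path works, and the two excluded operations are needed only once, to show by a short contradiction argument over the possible defect configurations (three parallel pairs, two loops, or a loop together with a parallel pair) that the very first network $N_s^1$ is level-$1$, respectively almost level-$1$. Without this observation---or the completed case analysis you postpone---your proof is incomplete in both regimes.
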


\begin{proof}
We first show that there is a CET$_1$ sequence connecting $N_s$ and $N_s'$, whereby every network in the sequence is a semi-directed network with precisely $k$ reticulations. Let $e=\{u,v\}$ be the edge of $N_s$ that is deleted in obtaining $N_s'$ from $N_s$ by a single CET. Furthermore, let $e'=\{p,q\}$ (respectively, $e'=(p,q)$ if $e'$ is directed) denote the recipient edge in $N_s$. If $e'$ is incident with one of the two donor edges incident with $u$ in $N_s$, the CET to obtain $N_s'$ from $N_s$ is a CET$_1$ and there is nothing to show. Thus, assume that $e'$ is not incident with one of the two donor edges. As $N_s$ is connected, there exists an undirected path $P$ between $u$ and $p$. Let $\{u,u_1\}, \{u_1, u_2\}, \ldots, \{u_{l-1}, u_l\}, \{u_l,u_{l+1}\}$ be the sequence of edges of $P$ with $\{u_l,u_{l+1}\} = \{p,q\}$. To ease reading, we view all edges of $P$ as being undirected regardless of whether they are tree or reticulation edges of $N_s$. We now argue that the CET  transforming $N_s$ into $N_s'$ can also be realized as a CET$_1$ sequence along $P$. More precisely, the first CET$_1$  consists of deleting $e=\{u,v\}$ and suppressing $u$, subdividing the edge $\{u_1,u_2\}$ with a new vertex $u^1$, and introducing the edge $\{u^1,v\}$. Because every CET$_1$ is also a CET, this results in a semi-directed phylogenetic network $N_s^1$ with cut edge $\{u^1,v\}$ and precisely $k$ reticulations. Moreover, by construction, $N_s^1$ has a rooted partner, $N_r^1$ say, such that $u^1$ is the parent of $v$ in $N_r^1$ or there exist three cut edges $(\rho,t), (t,u^1)$, and $(t,v)$ in $N_r^1$. Lastly, observe that $\{u_2,u_3\}, \{u_3, u_4\}, \ldots, \{u_l,u_{l+1}\}$ is  a path in $N_s^1$. We now perform a second CET$_1$, whereby we delete $\{u^1,v\}$ and suppress $u^1$ in $N_s^1$, subdivide the edge $\{u_2,u_3\}$ with a new vertex $u^2$, and introduce the edge $\{u^2,v\}$. By construction, this results in a semi-directed network $N_s^2$ with cut edge $\{u^2,v\}$ and precisely $k$ reticulations, where $u^2$ and $v$ are again such that $u^2$ is a parent of $v$ in the rooted partner $N_r^2$ of $N_s^2$, or $N_r^2$ contains the three cut edges $(\rho,t), (t,u^2)$, and $(t,v)$. Furthermore, $\{u_3,u_4\}, \{u_4,u_5\}, \ldots,  \{u_l,u_{l+1}\}$ is  a path in $N_s^2$. If $l>2$, we next apply a CET$_1$ to $\{u^2,v\}$ in $N_s^2$ with recipient edge $\{u_3,u_4\}$ and repeat. As $P$ consists of a finite number of edges, this process will eventually lead to a semi-directed network $N_s^l$ obtained from the semi-directed network $N_s^{l-1}$ by deleting the edge $\{u^{l-1},v\}$, suppressing $u^{l-1}$, subdividing the edge $\{u_l,u_{l+1}\} = \{p,q\}$ with a new vertex $u^l$, and adding the edge $\{u^l,v\}$. Since all vertices $u^i$ with $1 \leq i < l$ introduced during this process are immediately suppressed in subsequent steps, clearly $N_s^l \cong N_s'$, which completes the first part of the proof. 

It remains to argue that every network in the sequence is level-$1$ if $k < \vert X \vert-1$ and is almost level-$1$ if $k=\vert X \vert-1$. Consider the above  CET$_1$ sequence $N_s, N_s^1, N_s^2, \ldots, N^l_s \cong N_s'$ transforming $N_s$ into $N_s'$. 
We first consider the CET$_1$ transforming $N_s$ into $N_s^1$ and distinguish two cases:
\begin{enumerate}[(i)]
    \item If $k < \vert X \vert-1$, $N_s$ and $N_s'$ are semi-directed level-$1$ networks and contain at most one pair of parallel edges each and no loop. First, suppose that $N_s$ contains one pair of parallel edges. Since the CET transforming $N_s$ into $N_s'$ by assumption does not change the location of a pair of parallel edges, this implies that the donor edges of $N_s$ cannot be part of a 3-cycle.
    Hence, when deleting $e=\{u,v\}$ from $N_s$ to obtain $N_s^1$, no additional pair of parallel edges is created.  Second,  suppose that $N_s$ contains no pair of parallel edges. Then $N_s^1$ contains at most one pair of parallel edges. Thus, in both cases, $N_s^1$ is also level-$1$.
    \item If $k = \vert X \vert-1$, $N_s$ and $N_s'$ are semi-directed almost level-$1$ networks and each contain at most one loop and no pair of parallel edges, or at most two pairs of parallel edges but no loop. Assume for the sake of a contradiction that deleting $e=\{u,v\}$ from $N_s$ to obtain $N_s^1$ results in $N_s^1$ not being almost level-$1$, i.e., containing either three pairs of parallel edges, two loops, or one loop and a pair of parallel edges, while deleting $e=\{u,v\}$ from $N_s$ to obtain $N_s'$ results in $N_s'$ containing at most one loop and no pair of parallel edges, or at most two pairs of parallel edges but no loop.
    \begin{itemize}
        \item If $N_s^1$ contains three pairs of parallel edges, $N_s$ contains two pairs of parallel edges and the donor edges of $N_s$ are edges of a 3-cycle. Since $e=\{u,v\}$ is also deleted when transforming $N_s$ into $N_s'$, either $N_s'$ also contains three pairs of parallel edges, a contradiction to the fact that $N_s'$ is almost level-$1$, or the recipient edge for the CET from $N_s$ into $N_s'$ is an edge of a 2-cycle, which is also a contradiction, since the CET by assumption does not change the location of a pair of parallel edges. 
        \item If $N_s^1$ contains two loops, $N_s$ contains at least one loop and at least one pair of parallel edges. This contradicts the fact that $N_s$ is an almost level-$1$ network. 
        \item Finally, if $N_s^1$ contains one loop and one pair of parallel edges, then either (a) $N_s$ contains one loop and the donor edges of $N_s$ are edges of a 3-cycle, or (b) $N_s$ contains two pairs of parallel edges, and the donor edges of $N_s$ are edges of such a pair. Again, as $e$ is also deleted to obtain $N_s^1$ from $N_s$, either $N_s'$ also contains one loop and one pair of parallel edges, contradicting the fact that $N_s'$ is almost level-$1$, or the CET from $N_s$ to $N_s'$ is such that (a) the loop of $N_s$ is exchanged for two pairs of parallel edges, or (b) the two pairs of parallel edges of $N_s$ are exchanged for a loop. Both cases contradict the fact that the CET  transforming $N_s$ into $N_s'$ does not exchange a loop for two pairs of parallel edges or vice versa. 
    \end{itemize}
As all three cases lead to a contradiction, $N_s^1$ is an almost level-$1$ network. 
\end{enumerate}
Now, consider $i \in \{1,2, \ldots, l-1\}$ and the CET$_1$ transforming $N_s^i$ into $N_s^{i+1}$. Suppose that deleting the cut edge $\{u^i,v\}$ introduces an excessive loop or pair of parallel edges such that $N_s^{i+1}$ is not level-$1$ if $k < \vert X \vert-1$ or is not almost level-$1$ if $k=\vert X \vert-1$. Since $\{u^i,v\}$ was newly introduced when transforming $N_s^{i-1}$ into $N_s^{i}$, this new loop or pair of parallel edges must have already existed in $N_s^{i-1}$ and thus ultimately in $N_s$. Thus, it cannot be excessive and $N_s^{i+1}$ is a level-$1$, respectively almost level-$1$ network. This completes the proof.
\end{proof}

A CET$_1$ may be interpreted as an NNI move for semi-directed phylogenetic networks. In particular, a CET$_1$ move on such a network with no reticulation coincides with an NNI move on an unrooted phylogenetic tree.
Revisiting the CET sequences used in the proofs of Lemma \ref{l:standardshape} and \ref{l:standardform} to establish (weak) connectedness under CET for rooted level-$1$ networks with precisely $k$ reticulations and translating these sequences into their semi-directed counterparts to establish (weak) connectedness under CET for semi-directed level-$1$ networks with precisely $k$ reticulations, we notice that no CET changes the location of a pair of parallel edges or exchanges a loop for two pairs of parallel edges (or vice versa). Thus, the conditions of Proposition \ref{p:CET1moves} are satisfied and the next corollary follows from Theorem \ref{t:semidirected_connected}, where the definition of {\it weakly connected under CET$_1$} is analogous to that of weakly connected under CET.

\begin{corollary}\label{c:CET1}
Under CET$_1$, the space of semi-directed level-$1$ networks on $X$ with exactly $k$ reticulations is connected if $k \leq \vert X \vert-2$ and is weakly connected if $k = \vert X \vert-1$. 
\end{corollary}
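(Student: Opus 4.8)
The plan is to combine Theorem~\ref{t:semidirected_connected} with Proposition~\ref{p:CET1moves} in the most direct way possible. By Theorem~\ref{t:semidirected_connected}, for any two semi-directed level-$1$ networks $N_s$ and $N_s'$ on $X$ with exactly $k$ reticulations there is a CET sequence $N_s = M_0, M_1, \ldots, M_q = N_s'$ connecting them, where every $M_j$ is level-$1$ (if $k \leq \vert X \vert-2$) or almost level-$1$ (if $k = \vert X \vert-1$), each with exactly $k$ reticulations. The idea is then to replace each individual CET step $M_{j-1} \to M_j$ by a CET$_1$ subsequence using Proposition~\ref{p:CET1moves}, and concatenate all these subsequences into one long CET$_1$ sequence from $N_s$ to $N_s'$ that stays within the (almost) level-$1$ space with $k$ reticulations throughout.

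The first thing I would check carefully is that each single CET $M_{j-1} \to M_j$ actually satisfies the hypotheses of Proposition~\ref{p:CET1moves}, namely that it neither changes the location of a pair of parallel edges nor exchanges a loop for two pairs of parallel edges or vice versa. This is exactly the content flagged in the paragraph preceding the corollary: I would argue that the specific CET sequences constructed in the proofs of Lemma~\ref{l:standardshape} and Lemma~\ref{l:standardform}, which underlie Theorem~\ref{cor:rootedconnectedness} and hence Theorem~\ref{t:semidirected_connected}, never perform either of these forbidden operations. Concretely, I would revisit those proofs and observe that every CET there either operates on a level-$1$ portion, reduces a cycle's length, moves a whole $3$-cycle, or relocates a leaf; in the $k = \vert X \vert-1$ case at most one pair of parallel edges (or one loop) is ever present, so no move has donor edges in a $3$-cycle with recipient edge simultaneously in a $2$-cycle or loop in a way that relocates or converts parallel-edge structure. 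Translating these rooted sequences into their semi-directed counterparts (as done in the proof of Theorem~\ref{t:semidirected_connected}) preserves this property, since the semi-directed CETs inherit their donor/recipient structure from the rooted ones.

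Once the hypotheses are verified for every step, Proposition~\ref{p:CET1moves} supplies, for each $j$, a CET$_1$ sequence from $M_{j-1}$ to $M_j$ in which every intermediate network is level-$1$ (resp. almost level-$1$) with exactly $k$ reticulations. I would then simply splice these together: the last network of the $j$-th subsequence is $M_j$, which is the first network of the $(j+1)$-st subsequence, so concatenation yields a valid CET$_1$ sequence from $N_s$ to $N_s'$ with all intermediate networks in the required space. This gives connectedness when $k \leq \vert X \vert-2$ and weak connectedness when $k = \vert X \vert-1$, which is precisely the statement of the corollary.

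The main obstacle I anticipate is the verification step, i.e., auditing the CET sequences from Lemmas~\ref{l:standardshape} and~\ref{l:standardform} to confirm that none of them triggers a forbidden parallel-edge relocation or loop exchange. This is somewhat delicate because those proofs proceed through several phases (shrinking cycles to $3$-cycles, chaining them, then permuting leaves) and the $k = \vert X \vert-1$ case deliberately introduces a pair of parallel edges; I would need to trace where that pair appears and confirm that subsequent moves never have both their donor edges on a $3$-cycle while landing their recipient edge on the existing $2$-cycle or loop. Everything else—the splicing and the bookkeeping of reticulation counts—is routine once this verification is in place, and indeed the paper defers most of it to the remark preceding the corollary, so the proof itself can be quite short.
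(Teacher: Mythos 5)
Your proposal matches the paper's own argument: the paper likewise derives the corollary from Theorem~\ref{t:semidirected_connected} together with Proposition~\ref{p:CET1moves}, justified by revisiting the CET sequences in Lemmas~\ref{l:standardshape} and~\ref{l:standardform} and observing that none of those moves changes the location of a pair of parallel edges or exchanges a loop for two pairs of parallel edges (or vice versa). Your identification of that audit as the only nontrivial step, and your splicing of the resulting CET$_1$ subsequences, is exactly how the paper proceeds.
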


As mentioned in the introduction, the authors of~\cite{SolisLemus2016} conjectured that the five types of moves employed in SNaQ are sufficient to guarantee connectedness of the space of semi-directed level-$1$ networks with a fixed leaf set. While one of these five types increases the number of reticulations by one, no move decreases this number. Hence, SNaQ must effectively guarantee connectedness of the space of semi-directed level-$1$ network with a fixed number of reticulations and leaf set, because once a search through the space of semi-directed level-$1$ networks reaches a network with $k$ reticulations every network that is investigated later in the search has at least $k$ reticulations. Although a precise definition of SNaQ's fourth move type, called NNI move on a tree edge, is unfortunately missing in~\cite{SolisLemus2016}, Corollary~\ref{c:CET1} suggests that the space of level-$1$ networks with a fixed number of reticulations and leaf set is connected under the five moves employed in SNaQ if the authors additionally allow for NNI moves on a reticulation edge. Our results also suggests that, if $k=\vert X \vert-1$, then semi-directed level-$1$ networks that allow for at most two 2-cycles and a single loop need to be considered when searching for an optimal network although, as noted in~\cite{SolisLemus2016}, reticulations in a 2-cycle and certain other types of short cycles with small adjacent subnetworks are either not detectable or their parameters are not all identifiable.

\section{Concluding remarks} \label{sec:conclusion}
In this paper, we have introduced a new rearrangement operation on semi-directed phylogenetic networks, called CET, that  transforms any semi-directed level-$1$ network with precisely $k$ reticulations into any other such network with the same set of leaves. Moreover, we have introduced two additional operations, CET$^+$ and CET$^-$, that allow to move between  semi-directed phylogenetic networks and between semi-directed level-$1$ networks with a fixed leaf set and an arbitrary number of reticulations. While CET moves have a similar flavor as SPR and rSPR moves on unrooted, respectively rooted phylogenetic trees and networks~\cite{allen2001subtree,bordewich2005computational,bordewich2017lost,Gambette2017}, we have also shown that any CET can be translated into a sequence of more local CET$_1$ moves, which are similar to NNI moves studied on phylogenetic trees and networks~\cite{Gambette2017,Huber2015,Janssen2019,robinson1971comparison}. Such CET$_1$ moves essentially coincide with moves that are used in the popular network inference software PhyloNetworks \cite{SolisLemus2016, SolisLemus2017} up to a slight relaxation of one of their moves. Thus, our theoretical results on the connectedness of the space of semi-directed level-$1$ networks provide some level of assurance that a (locally) optimal semi-directed level-$1$ network can be reached from any such starting network in a hill-climbing search.

While our main focus has been to establish connectedness and diameter results for the space of semi-directed level-$1$ networks with a fixed number of reticulations and leaf set, there are several open questions to explore in future research. For instance, it would be interesting to analyze the computational complexity of determining the CET distance between any two semi-directed phylogenetic networks. It would also be interesting to analyze further properties of the space of semi-directed phylogenetic networks on a fixed leaf set or subspaces of it such as the radius of the space. 
Finally, one could ask which of the results presented in this paper carry over to unrooted phylogenetic networks.

\bigskip

\noindent{\bf Acknowledgements.} We thank  Sungsik Kong, Laura Kubatko, and Claudia Sol\'is-Lemus for helpful discussions. SL thanks the New Zealand Marsden Fund for their financial support.

\bibliographystyle{abbrvnat}
\bibliography{References.bib}

\end{document}